\newtheorem{theorem}{Theorem}[section]
\newtheorem{lemma}[theorem]{Lemma}
\newtheorem{rem}[theorem]{Remark}
\newenvironment{definition}[1][Definition]{\begin{trivlist}
\item[\hskip \labelsep {\bfseries #1}]}{\end{trivlist}}
\newenvironment{proof}[1][Proof]{\begin{trivlist}
\item[\hskip \labelsep {\bfseries #1}]}{\end{trivlist}}
\begin{document}

\title{Stochastic Stefan Problems Driven By Standard Brownian Sheets}
\author{Zhi Zheng, Richard B. Sowers}
\date{July 1, 2012}

\maketitle

\begin{abstract}
In this paper we study the effect of stochastic perturbations on a common type of moving boundary value PDE's which endorse Stefan boundary conditions, or Stefan problems, and show the existence and uniqueness of the solutions to a number of stochastic equations of this kind. Moreover we also derive the space and time regularities of the solutions and the associated boundaries via Kolmogorov's Continuity Theorem in an appropriately defined normed space.

The paper first conveys our previous results where randomness is smoothly correlated in space and Brownian in time, then introduces a new methodology that enables us to prove the existence and uniqueness of the solution to the standard heat equation driven by a space-time Brownian sheet as well as its boundary regularity, and finally extends it to the stochastic moving boundary partial PDE's driven by the same type of randomness.

\end{abstract}

\section{Introduction}

An important type of problems in the theory of partial differential equations (PDE's) is the moving boundary value problems. In this paper we study the effect of stochastic perturbations on such type of problems with Stefan boundary conditions, or Stefan problems, which have various applications in physics, engineering, and finance, and show the existence and uniqueness of the solutions to a number of stochastic equations of such kind. We also obtain the space and time regularities of the solutions and their associated boundaries via Kolmogorov's Continuity Theorem in a defined normed space.

The paper first conveys our previous results where randomness is smoothly correlated in space and Brownian in time, which has the convenience that the boundary regularity is naturally satisfied, then introduces a new methodology that enables us to prove the existence and uniqueness of the solution to the standard heat equation driven by a space-time Brownian sheet as well as its boundary regularity by defining an appropriate normed space whose norm correctly curvatures both the decay of the solution and its regularity at the boundary, and finally extends it to the stochastic moving boundary partial PDE's driven by the same type of randomness, where the new normed space is defined essentially in the same manner as enlightened by the previous result.

\subsection{Background}

\begin{figure}
\centering
\includegraphics[width=100mm]{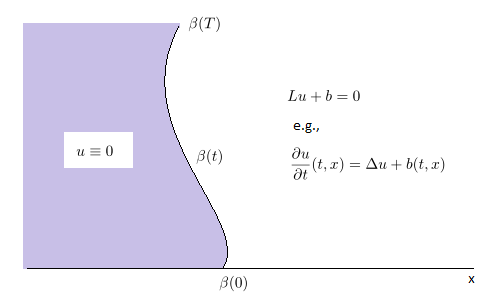}
\caption{An Illustration of Moving Boundary PDE's}
\label{Fig:mbp1}
\end{figure}

A moving boundary PDE of $u(t,x)$ describes the behavior of a system that consists of two phases, as illustrated in Figure~\ref{Fig:mbp1}, where $\beta(t)$ is a moving boundary which is part of the solution and must be solved simultaneously with $u(t,x)$. As can be seen in Figure~\ref{Fig:mbp1}, in the region to the left of the moving boundary (namely, the set $\{(t,x)\in[0,\infty)\times\mathbb R:x\le\beta(t)\}$) $u$ is constantly set to $0$; on the right side of the boundary (the set $\{(t,x)\in[0,\infty)\times\mathbb R:x>\beta(t)\}$) $u$ is described by a PDE of the general form $Lu+b=0$ where $L$ is a predefined second-order differential operator.

 For a moving boundary PDE problem, in addition to the regular boundary condition such as the Dirichlet condition  $u(t,\beta(t))=0$, there is always an extra boundary condition that describes the dynamics at the moving boundary, for instance, the \emph{Stefan boundary condition}
 \begin{equation}\label{Eq:SCondition}
  \frac{\partial u}{\partial x}(t,\beta(t)+)=\rho\dot\beta(t).
  \end{equation}

The type of moving boundary PDE's we consider throughout the thesis is the \emph{Stefan problems}, where $L$ is a heat or parabolic operator (for instance, $L:=-\partial/\partial t+\partial^2/\partial x^2$) with the Stefan boundary condition. Such type of problems has a variety of applications. For instance, in physics, they model the phenomena such as ice melting with the Stefan condition describing the heat balance at the interface (the moving boundary, see~\cite{fri64}); in finance they model the valuation of American options with the PDE derived from the Black-Scholes formula and the moving boundary describing the early exercise price boundary (see Lemma 7.8, Chapter 2,~\cite{mmf}).

\subsection{Motivation and Results}

In the mathematics of this thesis we are interested in the stochastic versions of the Stefan problems, namely, $b(t,x)$ is a formal notation about the stochastic addition (noise). In general $b(t,x)$ is a 2-dimensional distribution and therefore we work within the framework of the stochastic PDE theory by Walsh in~\cite{walsh86}, and based on the weak formulations of the equations and their equivalent evolution equations.

When $b(t,x)$ is multiplicative, namely, $b(t,x)=u(t,x)\dot W(t,x)$ where $\dot W$ is the noise (formal), ~\cite{kms} proves the existence and uniqueness of the solution when $\dot W=\dot W(t)$ is a distribution (Brownian) only in time and is constant in space. Also, in~\cite{kzb} we proved the existence and uniqueness of the solution when $\dot W(t,x)$ is a distribution (Brownian) only in time and is smoothly correlated (``colored'') in space, which is quickly reviewed in Section~\ref{Ch:sspcolored}.

However, when $\dot W(t,x)$ is a distribution (Brownian) in both space and time, we face a number of novel challenges that are beyond the scope of current literature:
\begin{enumerate}
\item[(1)] The spatial derivatives of the solution may not exist (see~\cite{walsh86}), which means the techniques we used in~\cite{kzb} based on $H$-norms may not be available; more importantly, the Stefan condition~\eqref{Eq:SCondition} involves the spatial derivative of the solution at the boundary, and therefore we shall show the existence of such a derivative simultaneously with the existence and uniqueness of the solution.
\item[(2)] The stochastic perturbation given by $b(t,x)$ is no longer spatially Lipschitz as in~\cite{kms} and~\cite{kzb}, which means in order to control the boundary shift effect in the It\^{o} integrals and the nonlinear drift term we may need additional spatial regularities on $b(t,x)$ in addition to just being multiplicative as in~\cite{kms} and~\cite{kzb}; in other words, although the perturbation $b$ vanishes when $u=0$, $u$ itself may not provide sufficient spatial smoothness to control the It\^{o} integrals when the boundary shifts.
\end{enumerate}

To tackle (1) alone, in Section~\ref{Ch:sheat} we first study the stochastic heat equation driven by a multiplicative space-time Brownian noise, namely, with $W$ a standard 2-dimensional Brownian sheet defined in~\cite{walsh86},
\[\begin{split}
\frac{\partial u}{\partial t}(t,x)&=\frac{\partial^2 u}{\partial x^2}(t,x)+u(t,x)\frac{\partial^2 W}{\partial t\partial x},\forall x>0,t\in[0,T],\\
u(0,x)&=u_0(x),\\
u(t,0)&=0,\forall x\le 0,t\in[0,T].
\end{split}\]
From such study we obtain the conditions and form of norms under which a boundary derivative exists, and more importantly, develop the essential techniques to calculate the sample-wise space and time modulus of continuity of the solution by means of Kolmogorov's Continuity Theorem (see~\cite{bm}), which is critical to evaluate the regularity needed to control the boundary shift effect.

Next, in Section~\ref{Ch:sspw}, first with the same multiplicative space-time Brownian noise (that is, $b(t,x)=uW_{tx}$,) we preliminarily calculate the effect of a boundary shift on the It\^{o} integral using the techniques and results obtained in the previous section, and find that it may be difficult to obtain the wanted control on iteration. Therefore we make the change in the stochastic perturbation $b(t,x)$ such that it is even smoother in space and in turn study the following stochastic Stefan problem driven by a scaled space-time Brownian noise:
\[\begin{split}
 \frac{\partial u}{\partial t}&=\frac{\partial^2 u}{\partial t^2}+ \sigma(x-\beta(t))\frac{\partial^2 W}{\partial t\partial x},\forall x>\beta(t),\\
u(t,x)&=0,\forall x\le \beta(t),\\
u(0,x)&=u_0(x), \\
\rho \dot \beta(t) &=\lim_{x\searrow \beta(t)} \frac{u(t,x)}x,\mathbb P\textrm{-a.s.}\\
 \end{split}\]
 where $b(t,x)=\sigma(x-\beta(t))W_{tx}$ and $\sigma(\cdot)$ is a function that satisfies certain regularity conditions which are sufficient to tackle (2). We show the existence and uniqueness of the solution to the above equation and additionally the regularity of the boundary using results from this and previous sections. This result also serves as the mathematical foundation for the modeling of the dynamics of limit orders.

\section{A Stochastic Stefan Problem with Spatially Colored Noise}\label{Ch:sspcolored}

In this section we quickly review our work in~\cite{kzb} by giving the main theorems and lemmas without proofs. We studied a stochastic Stefan problem of $u(t,x)$ driven by a multiplicative noise $u(t,x)d\xi_t(x)$, where $\xi_t(x)$ is a noise that is Brownian in time and smoothly correlated (or ``colored'') in space. Specifically, fix a probability space $(\Omega,\mathcal F, \mathbb P)$, and suppose $W:\Omega\times\mathbb R\times\mathbb R\to \mathbb R$ is the standard 2-dimensional Brownian sheet. Suppose also that $\eta:\mathbb R\to\mathbb R$ is $C^\infty$ and $\|\eta\|_{L^2(\mathbb R)}=1$. Then for $t\ge 0, x\in\mathbb R$, define
\[ \xi_t(x):=\int_0^t\int_0^\infty \eta(x-y)W(dyds).\]
Then we have the following problem and results.

\subsection{Problem and Main Theorem}

We showed the existence and uniqueness of the solution $u(t,x)$ to the following formal equation:
\begin{equation}\label{Eq:SSPC}
\begin{split}
 \frac{\partial u}{\partial t}&=\frac{\partial^2 u}{\partial t^2}+ u(t,x)d\xi_t(x),\forall x>\beta(t),\\
u(t,x)&=0,\forall x\le \beta(t),\\
u(0,x)&=u_0(x), \\
\rho \dot \beta(t) &=\lim_{x\searrow \beta(t)} \frac{u(t,x)}x, \mathbb P\textrm{-a.s.}\\
\end{split}
\end{equation}
  for $0\le t<\tau$ where $\tau$ is some well defined stopping time.

Since $W$ is a distribution, equation~\eqref{Eq:SSPC} is in fact formal, and we need to work on the weak definition (Definition 3.2 in~\cite{kzb}) and its equivalent evolution equation (Equation (15) or Lemma 3.4 in~\cite{kzb}). The main theorem is
\begin{theorem}\label{Thm:SSPC} The solution $u(t,x),\beta(t)$ to~\eqref{Eq:SSPC} exists and is unique for for $0\le t <\tau:=\lim_{L\to\infty}\tau^L$ where $\tau^L:=\inf\{t\in\mathbb R_+\cup\{0\}: |\dot \beta(t)|\ge L\}$, and $\tilde u(t,x):=u(t,x+\beta(t))$ satisfies
\begin{equation}\label{Eq:SSPCU}
\begin{split}
\tilde u(t,x)&=\int_0^\infty p(t,x,y)u_0(y)dy\\
&\quad\quad+\int_0^t\int_0^\infty \dot\beta(s)q(t-s,x,y)\tilde u(s,y)dyds\\
&\quad\quad+\int_0^t\int_0^\infty p(t-s,x,y)u(s,y) d\xi_t(x)ds,
\end{split}\end{equation}
where $p,q$ are standard kernels defined in~\cite{kzb}.
\end{theorem}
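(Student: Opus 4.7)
The strategy is to straighten the moving boundary via the change of variables $\tilde u(t,x):=u(t,x+\beta(t))$ and then solve the resulting fixed-boundary problem by Picard iteration on $[0,\tau^L]$, finally passing to the limit $L\to\infty$. Under this change of variables $\tilde u$ is defined on the half-line $x\ge 0$ with Dirichlet condition at $x=0$, and the heat operator acquires a transport term $\dot\beta(t)\,\partial_x\tilde u$. Writing this as a mild formulation against the Dirichlet heat kernel $p$ on $(0,\infty)$ and integrating by parts to move the spatial derivative off the transport term produces the kernel $q$; the three terms in~\eqref{Eq:SSPCU} are then the semigroup acting on the initial data, the drift contribution of the moving frame, and the stochastic convolution against the shifted colored noise $\tilde\xi_t(x):=\xi_t(x+\beta(t))$.

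Next I would set up a Picard iteration on a Banach space of adapted random fields with norm $\|\tilde u\|_{H,T}^2:=\mathbb E\sup_{t\le T\wedge\tau^L}\int_0^\infty |\tilde u(t,x)|^2 w(x)\,dx$ for a suitable weight $w$ that penalizes both decay at infinity and non-vanishing of order one at the origin. At step $n$, given $\tilde u^{(n)}$, the Stefan condition determines $\dot\beta^{(n)}(t)=\rho^{-1}\lim_{x\searrow 0}\tilde u^{(n)}(t,x)/x$, which is then substituted into the right-hand side of~\eqref{Eq:SSPCU} to produce $\tilde u^{(n+1)}$. To make the limit defining $\dot\beta^{(n)}$ meaningful one must control the spatial $C^1$ regularity of $\tilde u^{(n)}$ at the boundary; this is where the smoothness of $\eta$ enters decisively, since the $C^\infty$ colored correlation permits the stochastic convolution to be spatially differentiated, and the Dirichlet heat kernel transfers the boundary vanishing of $u_0$ into the required Stefan-type behavior of the iterate.

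The main obstacle is controlling the transport term, since $q(t-s,x,y)$ encodes a $\partial_y p$ with a $(t-s)^{-1/2}$ singularity. The localization by $\tau^L$ is what tames this: on $[0,\tau^L]$ we have $|\dot\beta(t)|\le L$, so a weakly singular Gronwall estimate yields contraction on a small interval $[0,T_0(L)]$ that can be iterated to cover all of $[0,\tau^L]$. The stochastic integral is handled by Burkholder--Davis--Gundy together with the identity $d\langle\xi_\cdot(x),\xi_\cdot(y)\rangle_s=(\eta*\eta)(x-y)\,ds$, which converts the stochastic norm into a deterministic $L^2$ estimate that is Lipschitz in the multiplicand and preserves the weight $w$. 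Uniqueness follows by subtracting two solutions sharing the same initial data, applying the same singular Gronwall argument on $[0,\tau^L]$ to the difference, and then passing to the limit $L\to\infty$ to recover uniqueness up to $\tau$. The same monotone passage $\tau^L\nearrow\tau$ produces the maximal existence time, completing the proof.
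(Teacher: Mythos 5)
Your overall architecture (straighten the boundary via $\tilde u(t,x):=u(t,x+\beta(t))$, pass to the mild formulation with kernels $p,q$, Picard iteration with a weakly singular Gronwall bound, localize, then relax the localization) matches the paper's. But there are two concrete gaps. First, the function space: you propose a weighted $L^2$-in-space norm, $\mathbb E\sup_{t}\int_0^\infty|\tilde u|^2w(x)\,dx$. The paper instead runs the iteration in an $H^2$-type Sobolev norm, and this choice is not cosmetic: the Stefan condition requires the pointwise trace $\partial_x\tilde u(t,0+)$ to exist in order to even define $\dot\beta^{(n)}$ at each Picard step, and a weighted $L^2$ norm --- even one forcing $\tilde u(t,x)/x$ to be square-integrable near $0$ --- does not give you that trace, whereas $H^2\hookrightarrow C^1$ does. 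You correctly observe that the $C^\infty$ correlation $\eta$ is what makes spatial differentiation of the stochastic convolution possible, but you never install that regularity into the norm in which the contraction is proved, so the step ``substitute $\dot\beta^{(n)}=\rho^{-1}\lim_{x\searrow 0}\tilde u^{(n)}(t,x)/x$ into the right-hand side'' is not justified inside your iteration scheme.

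Second, the localization is circular as stated. You propose to run the Picard iteration ``on $[0,\tau^L]$'' where $\tau^L$ is defined through $|\dot\beta(t)|\ge L$ --- but $\dot\beta$ is part of the unknown, so this stopping time is not available before the solution is built (and defining it per-iterate would make the time horizon change with $n$). The paper resolves this by modifying the equation itself: it inserts a smooth cutoff $\Psi_L\left(\|\tilde u^L(s,\cdot)\|_H\right)$ multiplying the nonlinear drift term, solves the resulting globally well-posed truncated equation by Picard iteration combined with Walsh's Lemma 3.3 (which gives summability of the increments on all of $[0,T]$ at once, rather than your short-interval contraction that must be restarted), and only afterwards defines $\tau^L$ from the constructed truncated solution and shows the truncated and original equations agree up to that time. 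You would need to replace your stopping-time localization with an equation-level cutoff of this kind for the existence argument to close.
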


\subsection{A Transformation}

We make the natural transformation $\tilde u(t,x):=u(t,x+\beta(t))$, which transforms the original weak definition and its equivalent evolution equation to a nonlinear PDE in the fixed domain $[0,T]\times[0,\infty)$. Then we have
\begin{lemma} The weak solution $u(t,x)$ is obtained by getting it from $\tilde u(t,x)$ of~\eqref{Eq:SSPCU} by the transformation $u(t,x):=\tilde u(t,x-\beta(t))$, where
\[ \dot\beta(t)=\frac1\rho \frac{\partial \tilde u}{\partial x}(t,0+).\]
\end{lemma}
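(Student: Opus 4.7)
The plan is to show that if $\tilde u$ solves the fixed-domain evolution equation~\eqref{Eq:SSPCU} together with the relation $\dot\beta(t)=\rho^{-1}\partial_x\tilde u(t,0+)$, then the inverse shift $u(t,x):=\tilde u(t,x-\beta(t))$ is a weak solution of the original free-boundary system~\eqref{Eq:SSPC} in the sense of Definition 3.2 of~\cite{kzb}. The strategy is simply to push every ingredient of~\eqref{Eq:SSPCU} back through the change of variables $y=x-\beta(t)$, using that on $[0,\tau^L)$ the boundary $\beta$ is absolutely continuous with locally bounded derivative so that ordinary (rather than It\^o) chain-rule suffices for the $\beta$-shift.

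First I would record the boundary and initial conditions: $u(t,\beta(t))=\tilde u(t,0)=0$ and $u(0,x)=\tilde u(0,x)=u_0(x)$ are immediate. The Stefan condition in~\eqref{Eq:SSPC} then follows from L'H\^opital: since $\tilde u(t,0)=0$,
\[
\lim_{x\searrow\beta(t)}\frac{u(t,x)}{x-\beta(t)}=\lim_{y\searrow 0}\frac{\tilde u(t,y)}{y}=\frac{\partial\tilde u}{\partial x}(t,0+)=\rho\dot\beta(t),
\]
matching the Stefan prescription (the expression $u(t,x)/x$ in~\eqref{Eq:SSPC} is read in the moving-frame sense, cf.~\cite{kzb}). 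Next, for a smooth test function $\phi\in C_c^\infty((\beta(t),\infty))$ I would compute
\[
\int_{\beta(t)}^\infty u(t,x)\phi(x)\,dx=\int_0^\infty \tilde u(t,y)\phi(y+\beta(t))\,dy
\]
and differentiate in $t$. The extra drift $\int_0^\infty \tilde u(t,y)\phi'(y+\beta(t))\dot\beta(t)\,dy$ produced on the right-hand side is precisely what the $\dot\beta(s)q(t-s,x,y)$ term in~\eqref{Eq:SSPCU} was designed to absorb, while the kernel identity $\partial_t p=\partial_x^2p$ together with integration by parts reproduces the Laplacian contribution on the left. The initial-data term $\int_0^\infty p(t,x,y)u_0(y)\,dy$ transforms into the standard heat semigroup acting on $u_0$ in the moving frame.

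The main obstacle I expect is handling the multiplicative noise term $u(t,x)\,d\xi_t(x)$ correctly under the shift. Because $\xi_t$ is smoothly correlated in space via the convolution kernel $\eta$, its realizations have enough spatial regularity that $\xi_t(x+\beta(t))$ may be rewritten as a stochastic integral against $W$ whose (random) integrand $\eta(x+\beta(t)-y)$ is adapted; one then verifies that the double integral $\int_0^t\!\int_0^\infty p(t-s,x,y)u(s,y)\,d\xi_s(y)\,ds$ is invariant under the inverse shift up to the $\dot\beta$ drift already accounted for. Since $\beta$ has bounded variation no quadratic-covariation correction appears, and a standard stochastic Fubini argument closes the identification. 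The reverse implication—that every weak solution of~\eqref{Eq:SSPC} yields a $\tilde u$ solving~\eqref{Eq:SSPCU}—follows by running the same computation backwards, which is exactly the content of Lemma 3.4 of~\cite{kzb} invoked here.
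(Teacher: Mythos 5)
The paper does not actually prove this lemma: Section~\ref{Ch:sspcolored} states explicitly that the results of~\cite{kzb} are reviewed ``without proofs,'' deferring to Definition~3.2 and Lemma~3.4 of that reference, so there is no in-text argument to compare yours against. Judged on its own, your sketch follows what is clearly the intended route: undo the shift via $y=x-\beta(t)$, observe that the transport term $\dot\beta(t)\int_0^\infty\tilde u(t,y)\phi'(y+\beta(t))\,dy$ generated by the moving frame is exactly what the $\dot\beta(s)q(t-s,x,y)$ term in~\eqref{Eq:SSPCU} encodes, note that no It\^o correction arises because $\beta$ is $C^1$ with $|\dot\beta|\le L$ on $[0,\tau^L)$, and read the Stefan condition as the one-sided derivative in the moving frame. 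Your reading of $u(t,x)/x$ in~\eqref{Eq:SSPC} as $u(t,x)/(x-\beta(t))$ is the correct one; taken literally that display would be vacuous whenever $\beta(t)\neq 0$.

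Two places where the sketch is thinner than a proof would need to be. First, you differentiate $\int_0^\infty \tilde u(t,y)\phi(y+\beta(t))\,dy$ in $t$ as if $\tilde u$ were classically differentiable in time; it is not, and the actual passage from the mild form~\eqref{Eq:SSPCU} to the weak formulation must go through the usual Walsh machinery (semigroup property of $p$, stochastic Fubini, approximation of the test function), which you only gesture at in the last sentence. Second, when you shift the noise you write the integrand as $\eta(x+\beta(t)-y)$ with the \emph{terminal} value $\beta(t)$; inside the $W(dy\,ds)$ integral the shift must be by $\beta(s)$ at the running time $s$, or the integrand is anticipating and the It\^o integral is not defined. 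Both points are standard and repairable, so I would call this a correct plan rather than a flawed one, but a written-out proof has to address them.
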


This technique is also used in Section~\ref{Ch:sspw}.

\subsection{Existence and Uniqueness of the Truncated Solution}

First, in order to control the nonlinear drift term we shall truncate the $H^2$-norm of the solution (which is shown as equivalent to truncating the nonlinear term, or $\dot\beta(t)$), and work with the truncated solution $\tilde u^L(t,x)$, namely, the solution to
 \begin{equation}\label{Eq:SSPCUL}
\begin{split}
\tilde u^L(t,x)&=\int_0^\infty p(t,x,y)u_0(y)dy\\
&\quad\quad+\int_0^t\int_0^\infty \dot\beta(s)q(t-s,x,y)\Psi_L\left(\|\tilde u^L(s,\cdot)\|_H\right)\tilde u^L(s,y)dyds\\
&\quad\quad+\int_0^t\int_0^\infty p(t-s,x,y)u^L(s,y) d\xi_t(x)ds,
\end{split}\end{equation}
where $\Psi_L:[0,\infty)\to [0,1]$ is as a smooth monotone decreasing function that satisfies $\chi_{[0,L]}\le\Psi_L\le\chi_{[0,L+1]}$.

 The existence and uniqueness of the truncated solution is proved by a Picard-type iteration on $H^2$-spaced combined with similar calculations in Lemma 3.3 of~\cite{walsh86}. Unlike in Section~\ref{Ch:sheat} and Section~\ref{Ch:sspw}, in this problem we need not worry about the existence of the spatial derivatives, or in particular,
\[  \frac{\partial \tilde u}{\partial x}(t,0+),\]
which is the right hand side in the Stefan boundary condition, because the stochastic perturbation is smooth in space. By calculations of such an iteration and the structural results about $H^2$-space (see Section 5.1 of~\cite{kzb}), combined with Lemma 3.3 of ~\cite{walsh86}, we have
\begin{lemma} Fix $L>0$. Then the solution $\tilde u^L(t,x)$ to~\eqref{Eq:SSPCUL} exists and is unique.
\end{lemma}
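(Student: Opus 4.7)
The plan is to prove the lemma by Picard iteration on an $H^2$-valued function space, using the truncation $\Psi_L$ to turn the otherwise only locally Lipschitz nonlinear drift into a globally Lipschitz one. Fix $T>0$ and let $\mathcal{X}_T$ be the Banach space of predictable $H$-valued processes $v$ on $[0,T]$ equipped with
\[
\|v\|_{\mathcal{X}_T}^2 := \sup_{t\in[0,T]}\mathbb{E}\bigl\|v(t,\cdot)\bigr\|_H^2.
\]
Let $\Phi:\mathcal{X}_T\to\mathcal{X}_T$ be the map sending $v$ to the right-hand side of~\eqref{Eq:SSPCUL} with $v$ substituted for $\tilde u^L$ throughout, where $\dot\beta(s)$ is read as $\rho^{-1}\partial_x v(s,0+)\,\Psi_L(\|v(s,\cdot)\|_H)$; this is well defined because $H$ embeds into $C^1([0,\infty))$ by the structural results of Section~5.1 in~\cite{kzb}. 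The task reduces to producing a unique fixed point of $\Phi$.

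First I would record the semigroup bounds, namely that the heat-kernel convolutions against $p(t,x,y)$ and $q(t-s,x,y)$ are bounded from $H$ to $H$ with standard deterministic constants. Second, since $\Psi_L$ is supported in $[0,L+1]$, the quantity $\dot\beta(s)\Psi_L(\|v(s,\cdot)\|_H)v(s,y)$ is globally Lipschitz in $v$ in the $H$-norm with a constant $C(L)$; this controls the drift integral. Third, I would invoke Lemma~3.3 of~\cite{walsh86} to bound the stochastic convolution against the spatially colored noise $\xi$ in the $H$-norm, exploiting the smoothness of the kernel $\eta$ to produce an estimate of the form
\[
\mathbb{E}\bigl\|\Phi(v_1)(t)-\Phi(v_2)(t)\bigr\|_H^2 \le C(L,\eta,T)\int_0^t \mathbb{E}\bigl\|v_1(s,\cdot)-v_2(s,\cdot)\bigr\|_H^2\, ds.
\]
Iterating this inequality (equivalently, passing to the weighted norm $\sup_t e^{-\lambda t}\mathbb{E}\|v(t,\cdot)\|_H^2$ for $\lambda$ large) shows that some power of $\Phi$ is a contraction on $\mathcal{X}_T$, hence $\Phi$ has a unique fixed point $\tilde u^L$. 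Uniqueness of the original equation follows from the same estimate applied to two candidate solutions combined with Gronwall's inequality.

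The main obstacle circumvented by the truncation is that $\dot\beta(s)$ depends nonlinearly on $\tilde u^L$ through the boundary derivative $\partial_x \tilde u^L(s,0+)$, and without the factor $\Psi_L(\|v\|_H)$ this coupling is at best locally Lipschitz in $H$; the truncation makes it globally Lipschitz with an explicit constant $C(L)$, which is what allows the Picard iteration to close. The other technical ingredient is verifying the stochastic estimate in the $H$-norm, which relies essentially on the spatial smoothness of the colored noise $\xi$ (i.e., the regularity of $\eta$); this is precisely the issue that becomes genuinely hard in the space-time white noise case treated in Sections~\ref{Ch:sheat} and~\ref{Ch:sspw}, where an $H$-based iteration is no longer available and an entirely new normed space must be introduced.
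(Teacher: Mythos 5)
Your proposal follows essentially the same route as the paper: a Picard-type iteration in the $H$-norm, with the truncation $\Psi_L$ rendering the nonlinear drift globally Lipschitz, the structural results of Section~5.1 of~\cite{kzb} (the embedding into $C^1$) making sense of the boundary derivative, and Lemma~3.3 of~\cite{walsh86} closing the stochastic estimate. The paper only sketches this argument (it is a review of prior work), and your write-up is a faithful and correctly organized expansion of that same sketch.
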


\subsection{Relaxation of the Truncation}

Define the stopping time
\[ \tau^L:=\inf\{t\ge0:\|\tilde u^L(t,\cdot)\|_H\ge L\}.\]
Also, define
\[ \tau:=\lim_{L\to\infty} \tau^L,\]
and
\[ \tilde u(t,x):=\lim_{L\to\infty} \tilde u^L(t,x).\]

We then have
\begin{lemma}
\[ \overline{\lim_{t\nearrow \tau}}\|\tilde u(t,\cdot)\|_H=\infty\]
and
\[ \overline{\lim_{t\nearrow \tau}}\left|\frac{\partial \tilde u}{\partial x}(t,0+)\right|=\infty.\]
\end{lemma}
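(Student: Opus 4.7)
The plan is to dispatch the two claims in sequence: the first is essentially tautological from the construction of $\tau^L$ combined with sample-path continuity of $t\mapsto\|\tilde u^L(t,\cdot)\|_H$, while the second is a contradiction argument that leverages the equivalence between truncating the $H$-norm and truncating the nonlinear boundary term $\dot\beta(t)=\rho^{-1}\partial_x\tilde u(t,0+)$ noted in the previous subsection.

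Step one is consistency of the family $\{\tilde u^L\}_{L>0}$. For $L_1<L_2$, the truncation factors $\Psi_{L_1}$ and $\Psi_{L_2}$ both equal $1$ on the event $\{\|\tilde u^{L_1}(s,\cdot)\|_H<L_1\}$, so uniqueness of the truncated solution gives $\tilde u^{L_1}=\tilde u^{L_2}$ on $[0,\tau^{L_1})$. This makes $L\mapsto\tau^L$ non-decreasing (in fact strictly increasing whenever finite, by continuity), the limit $\tau$ is well-defined, and $\tilde u(t,x):=\lim_L\tilde u^L(t,x)$ is consistently defined on $[0,\tau)$ and satisfies~\eqref{Eq:SSPCU} there (without truncation, since $\Psi_L\equiv 1$ on $[0,\tau^L)$). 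By sample-path continuity of the $H$-norm, $\|\tilde u^L(\tau^L,\cdot)\|_H=L$ on $\{\tau^L<\infty\}$, so $\|\tilde u(\tau^L-,\cdot)\|_H=L$; sending $L\to\infty$ with $\tau^L\nearrow\tau$ gives the first limit.

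For the second limit I would fix a sample path on which the first claim holds and argue by contradiction: suppose $M:=\overline{\lim_{t\nearrow\tau}}|\partial_x\tilde u(t,0+)|<\infty$, so there exists $t_*<\tau$ with $|\dot\beta(s)|\le(M+1)/\rho$ for all $s\in[t_*,\tau)$. Substituting this pointwise bound into~\eqref{Eq:SSPCU}, applying the $H$-norm estimates for the kernels $p,q$ from Section~5.1 of~\cite{kzb}, and invoking a Walsh-type $H$-bound (Lemma~3.3 of~\cite{walsh86}) on the stochastic convolution — which is available because $\eta$ is smooth in space — yields, after taking expectations, a Gronwall-type inequality of the form
\[ \mathbb E\|\tilde u(t\wedge\tau^L,\cdot)\|_H^2\le C\Bigl(\mathbb E\|\tilde u(t_*,\cdot)\|_H^2+\int_{t_*}^t\mathbb E\|\tilde u(s\wedge\tau^L,\cdot)\|_H^2\,ds\Bigr),\qquad t\in[t_*,\tau),\]
with $C=C(M,\rho,T,\eta)$; Gronwall followed by $L\to\infty$ then forces $\|\tilde u(t,\cdot)\|_H$ to remain bounded as $t\nearrow\tau$, contradicting the first limit.

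The main obstacle is closing the Gronwall step. One must verify that once $|\dot\beta|$ is externally controlled, the drift kernel $q$ contributes only a factor linear in $\|\tilde u(s,\cdot)\|_H$ with no hidden re-dependence on the boundary derivative, and that the stochastic convolution estimate does not reintroduce a $\dot\beta$-dependence through the moving-boundary shift. This is essentially the estimate already encoded in the Picard iteration producing $\tilde u^L$; the effort lies in extracting it \emph{without} the truncation factor on $[t_*,\tau)$, substituting the assumed bound on $|\dot\beta|$ in place of the truncation, and handling integrability through the stopping argument at $\tau\wedge\tau^L$ before sending $L\to\infty$.
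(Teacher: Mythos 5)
The paper itself offers no proof here---Section~\ref{Ch:sspcolored} explicitly reviews the results of~\cite{kzb} ``without proofs''---so your proposal can only be measured against the strategy the paper describes, namely that truncating the $H$-norm is equivalent to truncating $\dot\beta$, and that the truncation is relaxed at the end. Your two-step plan is consistent with that: the first claim does follow from consistency of the family $\{\tilde u^L\}$ plus continuity of $t\mapsto\|\tilde u^L(t,\cdot)\|_H$ and the exit identity $\|\tilde u^L(\tau^L,\cdot)\|_H=L$, and your observation that $\tau^L$ is strictly increasing while finite correctly handles the degenerate case $\tau^{L}=\tau^{L'}$. (One caveat you should state: if $\tau^L=\infty$ for some finite $L$ then $\tau=\infty$ and $\|\tilde u(t,\cdot)\|_H$ stays below $L$ forever, so both assertions must be read on the event $\{\tau^L<\infty\ \forall L\}$; as written your argument silently assumes this.) For the second claim, reducing it to ``bounded $\dot\beta$ near $\tau$ implies no $H$-norm blow-up'' via Gronwall is the right idea and essentially the only available one, since the Sobolev/trace inequality $|\partial_x\tilde u(t,0+)|\le C\|\tilde u(t,\cdot)\|_H$ only gives the converse implication.

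The genuine gap is in how you set up the contradiction. You ``fix a sample path'' to obtain the bound $M$ and the time $t_*$, and then immediately ``take expectations'' to run Gronwall on $\mathbb E\|\tilde u(t\wedge\tau^L,\cdot)\|_H^2$. These are incompatible: $M$ and $t_*$ are random, and the event on which $|\dot\beta|\le N$ up to $\tau$ is not adapted, so you cannot condition on it or insert the pointwise bound inside the expectation. The standard repair is to quantify over deterministic data: for integers $N$ and rationals $q$ let $E_{N,q}:=\{q<\tau\}\cap\{\sup_{q\le s<\tau}|\dot\beta(s)|\le N\}$, introduce the stopping time $\rho_N:=\inf\{t\ge q:|\dot\beta(t)|\ge N\}$ (which dominates $\tau$ on $E_{N,q}$), run the Gronwall estimate for $\mathbb E\|\tilde u(t\wedge\rho_N\wedge\tau^L,\cdot)\|_H^2$ with the truncation factor replaced by the bound $N$, and deduce via Chebyshev that $\mathbb P(E_{N,q}\cap\{\tau^L<T\})\le C_{N,q,T}/L^2\to0$, whence $\mathbb P(E_{N,q})=0$ by the first claim; the complement of the second claim is $\bigcup_{N,q}E_{N,q}$. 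Without this localization the Gronwall constant $C=C(M,\rho,T,\eta)$ in your display is a random variable and the inequality is not one to which Gronwall's lemma applies. The kernel-estimate obstacle you flag at the end is comparatively benign---those bounds are exactly the ones already used in the Picard iteration for $\tilde u^L$---whereas the measurability issue above is the step that would actually fail as written.
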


Finally, we have
\begin{lemma} The solution $\tilde u(t,x)$ to~\eqref{Eq:SSPCU} exists and is unique.
\end{lemma}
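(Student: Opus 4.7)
The plan is a standard localization argument, stitching together the truncated solutions $\tilde u^L$ into a solution of~\eqref{Eq:SSPCU} up to the blow-up time $\tau$, and then reading off uniqueness from the uniqueness of $\tilde u^L$.

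For \emph{existence}, I first observe that on the event $\{t<\tau^L\}$ we have $\|\tilde u^L(s,\cdot)\|_H<L$ for all $s\le t$, so the cutoff factor $\Psi_L(\|\tilde u^L(s,\cdot)\|_H)$ equals $1$ throughout the integrand in~\eqref{Eq:SSPCUL}. Consequently, $\tilde u^L(t,x)$ solves the untruncated evolution equation~\eqref{Eq:SSPCU} on $[0,\tau^L)$. Next I show consistency: for $L'>L$, both $\tilde u^L$ and $\tilde u^{L'}$ solve the $L$-truncated equation~\eqref{Eq:SSPCUL} on $[0,\tau^L)$ (since the $H$-norm of $\tilde u^{L'}$ stays below $L$ there, so its $\Psi_L$-cutoff is also identically $1$, and it thus agrees with the equation whose unique solution is $\tilde u^L$). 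By uniqueness of the truncated solution this forces $\tilde u^L=\tilde u^{L'}$ on $[0,\tau^L)$, whence $\tau^L\le \tau^{L'}$ and the family $\{\tau^L\}_L$ is monotone. I then define $\tilde u(t,x):=\tilde u^L(t,x)$ for $t<\tau^L$, which is well-defined on $[0,\tau)$ and, by the first observation, satisfies~\eqref{Eq:SSPCU}.

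For \emph{uniqueness}, suppose $\tilde u$ and $\tilde v$ are two solutions of~\eqref{Eq:SSPCU} on $[0,\tau)$ with common initial data $u_0$. Introduce the auxiliary stopping times
\[\sigma^L_u:=\inf\{t\ge 0: \|\tilde u(t,\cdot)\|_H\ge L\},\quad \sigma^L_v:=\inf\{t\ge 0: \|\tilde v(t,\cdot)\|_H\ge L\},\quad \sigma^L:=\sigma^L_u\wedge \sigma^L_v.\]
On $[0,\sigma^L)$, both $\tilde u$ and $\tilde v$ keep their $H$-norm below $L$, so the cutoff $\Psi_L$ applied to them is identically $1$, and both therefore satisfy the $L$-truncated equation~\eqref{Eq:SSPCUL}. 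Uniqueness of the truncated solution forces $\tilde u\equiv \tilde v\equiv \tilde u^L$ on $[0,\sigma^L)$. In particular $\sigma^L_u=\sigma^L_v=\tau^L$, and letting $L\to\infty$ we obtain $\tilde u=\tilde v$ on $[0,\tau)$.

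The only delicate point is to make sure that the stopping times of the candidate untruncated solutions coincide with the stopping times $\tau^L$ of the truncated family; this is exactly where the previous lemma is used, since it guarantees that the $H$-norm of $\tilde u$ genuinely blows up at $\tau$, so no solution can be continued past that time in a way that remains consistent with the localized construction. Aside from this bookkeeping, the argument is purely a consequence of the already-established well-posedness of the truncated problem and the fact that the cutoff is inert on the interval where the $H$-norm stays below the threshold.
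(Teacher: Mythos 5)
Your overall strategy---patch the truncated solutions $\tilde u^L$ together on $[0,\tau)$ and read off uniqueness from the uniqueness of the truncated problem---is exactly the route the paper takes: Section~\ref{Ch:sspcolored} states this lemma without proof, but it announces precisely this truncate-then-relax scheme, and the analogous lemma in Section~\ref{Ch:sspw} is proved by the identical patching argument (fix $t<\tau$, take $L$ larger than the running supremum, and invoke uniqueness of the truncated solution). One step of your write-up needs repair, though. In the consistency argument you assert that for $L'>L$ the $H$-norm of $\tilde u^{L'}$ stays below $L$ on $[0,\tau^L)$; but $\tau^L$ is defined through $\tilde u^L$, so this bound on $\tilde u^{L'}$ is only available \emph{after} the two solutions have been identified on that interval---as written the step is circular. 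The standard fix is to introduce $\sigma:=\inf\{t\ge 0:\|\tilde u^{L'}(t,\cdot)\|_H\ge L\}$: on $[0,\tau^L\wedge\sigma)$ both cutoffs are inert, so both processes solve the $L$-truncated equation there; uniqueness for the truncated equation (in its localized, up-to-a-stopping-time form, which follows from the same Gronwall/iteration estimate used to prove it) gives $\tilde u^L=\tilde u^{L'}$ on $[0,\tau^L\wedge\sigma)$, and this in turn forces $\sigma\ge\tau^L$, closing the loop. Your uniqueness half already sets up the correct stopping times $\sigma^L_u,\sigma^L_v$, so the same localized-uniqueness remark is the only thing to make explicit there. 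With that adjustment the argument is complete and coincides with the paper's.
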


Combining all the lemmas, we showed the main theorem. Note that the idea of first stopping $|\dot\beta(t)|$ from growing too large (that is, exceeding a fixed $L$), then using this to control the nonlinear drift term, and finally relaxing this truncation and obtaining a global stopping time $\tau$ is important, and is also used in Section~\ref{Ch:sspw} when we study a stochastic Stefan problem driven by a scaled space-time Brownian noise.

\section{Boundary Regularity of the Stochastic Heat Equation}\label{Ch:sheat}

In the previous section we have shown the existence and uniqueness of a stochastic Stefan problem with a spatially-colored and Brownian-in-time noise. Since we would further study a stochastic Stefan problem with space-time Brownian noise under certain regularity conditions, it is necessary that we first understand the effect of such noise on the regularity of the boundary, namely, the differentiability of the solution at the moving boundary, in addition to the existence and uniqueness of the solution itself. This task is critical because the Stefan boundary condition of such a problem involves the spatial derivative of the solution at the boundary, and since the noise is Brownian in space (as well as in time), the solution in general may not have a spatial derivative everywhere except at the boundary.

Therefore, to simplify the problem, we first in this section consider a stochastic heat equation of $u$ driven by a multiplicative space-time Brownian noise, so that the noise vanishes at the boundary (where $u=0$), and we would expect that $u$ is differentiable just at the boundary. Specifically, by removing the shift effect of the moving boundary and studying a stochastic heat equation of this kind, we look to understand
\begin{enumerate}
\item[(1)] under what sense (or, in what normed space) the solution exists, and the connection between such a norm or space and the differentiability of the solution at the moving boundary;
\item[(2)] in what sense ($\mathbb P$-a.s.? in $L^p$? etc.)  the Stefan boundary condition holds;
\item[(3)] the spatial regularity of the solution (H\"{o}lder continuity? with what parameters?) which may guide us on the study of a Stefan problem with a space-time Brownian noise in the next section, in particular, the effect of a boundary shift on the iteration of the It\^{o} integral.
\end{enumerate}

\subsection{Problem and Main Theorem}
Fix a probability space $(\Omega,\mathcal{F},\mathbb{P})$, and suppose $W:\Omega\times\mathbb R\times\mathbb R\to\mathbb R$ is a standard 2-dimensional Brownian sheet. Consider the (formal) stochastic heat equation of $u(t,x)$ with a multiplicative space-time Brownian noise on $[0,T]\times[0,\infty)$, under a Dirichlet boundary condition:
\begin{equation}\label{Eq:sheatDiff}
\begin{split}
\frac{\partial u}{\partial t}(t,x)&=\frac{\partial^2 u}{\partial x^2}(t,x)+u(t,x)\frac{\partial^2 W}{\partial t\partial x},\forall x>0,t\in[0,T],\\
u(0,x)&=u_0(x),\\
u(t,0)&=0,\forall x\le 0,t\in[0,T].
\end{split}\end{equation}

From the classic work of~\cite{walsh86} by Walsh, the weak solution to the formal equation~\eqref{Eq:sheatDiff} is equivalent to the evolution equation
\[ u(t,x)=\int_0^\infty p(t,x,y)u_0(y)dy + \int_0^t\int_0^\infty p(t-s,x,y)u(s,y)W(dyds)\]
where $t\in [0,T],x\ge 0$ and $p(t,x,y)$ is the corresponding kernel as defined in the previous section. Then we have the following theorem as the main conclusion of this section:
\begin{theorem}\label{Thm:sheatMain}
\begin{enumerate}
\item[(1)] The solution $u(t,x)$ to~\eqref{Eq:sheatDiff} exists and is unique with respect to a normed space;
\item[(2)] $\mathbb P$-a.s., for all $t\in[0,T]$, $u(t,x)$ is differentiable at $x=0$, and
\[\frac{\partial u}{\partial x}(t,0)= \lim_{x\searrow 0} \frac{u(t,x)}x = \int_0^\infty \frac{\partial p}{\partial x}(t,0,y)u_0(y)dy + \int_0^t\int_0^\infty \frac{\partial p}{\partial x}(t,0,y) u(s,y)W(dyds);\]
\item[(3)] For $t\in[0,T]$, define $v(t,x):=u(t,x)/x$ for $x>0$ and $v(t,0):=\lim_{x\searrow 0} v(t,x)$, then $\mathbb P$-a.s.,
$v(t,x)$ is $\left(\frac14-\epsilon,\frac16-\epsilon\right)$-H\"{o}lder continuous on $[0,T]\times[0,1]$ for $\epsilon>0$.
\end{enumerate}
\end{theorem}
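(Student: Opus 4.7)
The proof revolves around working not with $u$ directly but with the ratio $v(t,x) := u(t,x)/x$, which is the natural object both for the existence theory and for all three conclusions. The key observation is that the Dirichlet heat kernel satisfies $p(t,x,y) \le C((xy/t)\wedge 1)\,t^{-1/2}$, so that the divided kernel $p(t-s,x,y)/x$ acting on an integrand of the form $y\,v(s,y)$ yields a bounded kernel and permits estimates uniform down to the boundary. This motivates introducing a normed space of predictable random fields $v$ with a norm of the form
\[
\|v\|_\star := \sup_{t\in[0,T]}\Bigl(\mathbb{E}\sup_{x\ge 0}|v(t,x)|^p\,\phi(x)^p\Bigr)^{1/p}
\]
for $p$ large and $\phi$ an appropriate decay weight; the space is designed so that $\|v\|_\star<\infty$ simultaneously enforces the linear decay $u\sim cx$ at the boundary and enough integrability to control the stochastic integral.

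\textbf{Part (1).} Substituting $u(s,y)=y\,v(s,y)$ in the evolution equation yields the fixed-point problem
\[
\Phi(v)(t,x) = \frac{1}{x}\int_0^\infty p(t,x,y)\,u_0(y)\,dy + \frac{1}{x}\int_0^t\!\!\int_0^\infty p(t-s,x,y)\,y\,v(s,y)\,W(dy\,ds).
\]
I would verify that $\Phi$ maps the $\|\cdot\|_\star$-space into itself and is a contraction over a short time interval $[0,\delta]$, using Burkholder--Davis--Gundy on the stochastic term together with a pointwise bound of the form $p(t-s,x,y)/(xy)\le C\,q(t-s,y)$ for an explicit integrable kernel $q$. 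A standard concatenation argument then extends this to $[0,T]$, producing a unique fixed point $v$ and hence $u=xv$.

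\textbf{Part (2).} Since $p(t,0,y)=0$, the ratio $p(t,x,y)/x \to (\partial p/\partial x)(t,0,y)$ as $x\searrow 0$ for each $(t,y)$, with a quantitative rate $|p(t,x,y)/x - (\partial p/\partial x)(t,0,y)| \le C\,x^\alpha\,K(t,y)$ for some $\alpha>0$ and an integrable kernel $K$. Combining this with the $\|\cdot\|_\star$-control on $v$ from Part (1), dominated convergence handles the deterministic term and a BDG-based $L^p$ estimate handles the stochastic term, yielding $L^p(\Omega)$-convergence for each fixed $t$ to the claimed limit. The upgrade to almost-sure equality simultaneously for all $t\in[0,T]$ follows from the joint continuity established in Part (3).

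\textbf{Part (3).} Hölder regularity of $v$ is obtained from Kolmogorov's continuity theorem. The main task is to prove moment estimates
\[
\mathbb{E}|v(t_1,x_1)-v(t_2,x_2)|^p \le C\bigl(|t_1-t_2|^{p/4}+|x_1-x_2|^{p/6}\bigr)
\]
for all sufficiently large $p$ and $(t_i,x_i)\in[0,T]\times[0,1]$. These in turn reduce to careful $L^p$ estimates on the space-time increments of the rescaled kernel $p(t-s,x,y)/x$, applied via BDG to the stochastic integral and directly to the deterministic term. I expect this step to be the main obstacle of the whole theorem: the exponents $1/4$ in time and $1/6$ in space are not the naive exponents one would read off from the interior stochastic heat equation, and in particular the reduced spatial exponent $1/6$ (coming from the way the $1/x$ factor amplifies small-scale fluctuations of the noise near $x=0$) requires a delicate scaling analysis of the kernel increments rather than the off-the-shelf heat-kernel bounds that suffice in the interior.
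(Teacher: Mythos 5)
Your overall strategy is the same as the paper's: pass to $v=u(t,x)/x$, absorb the $1/x$ into a rescaled kernel $\tilde p(t,x,y)=\tfrac{y}{x}p(t,x,y)$, run a Picard/fixed-point argument in a moment norm, and obtain both the boundary derivative and the $(\tfrac14-\epsilon,\tfrac16-\epsilon)$-H\"older regularity from an anisotropic Kolmogorov continuity argument; your identification of the needed moment estimates (time increments of order $|t-s|^{p/4}$, space increments of order $|x-y|^{p/6}$ for $p$-th moments) is exactly right. However, two points are genuine gaps rather than routine details. First, your norm places the supremum over $x$ \emph{inside} the expectation, $\bigl(\mathbb E\sup_{x}|v(t,x)|^{p}\phi(x)^{p}\bigr)^{1/p}$, while the estimates you propose to close the fixed point (Burkholder--Davis--Gundy applied pointwise in $x$, then a kernel bound) only deliver control of $\sup_{x}\mathbb E\bigl[|v(t,x)|^{2p}\bigr]$. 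To bound $\mathbb E\sup_x$ of a Walsh stochastic integral you would need an additional chaining or factorization step, which you do not supply and which is unnecessary: the paper works with the weaker norm $\|f\|_{2p}:=\sup_{x>0}\mathbb E[f^{2p}(x)]$ (no weight $\phi$ is needed either), for which the iteration closes via $H_{n+1}(t)\le C\int_0^t H_n(s)(t-s)^{-1/2}\,ds$ and Walsh's Lemma 3.3.

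Second, and more importantly, the step you yourself flag as ``the main obstacle'' --- the kernel increment estimates --- is precisely where the substance of the theorem lies, and you leave it unexecuted. The required facts are
\[
\int_0^t\!\!\int_0^\infty\bigl[\tilde p(s,x,z)-\tilde p(s,y,z)\bigr]^2\,dz\,ds\le C_T|x-y|^{1/3},
\qquad
\int_0^s\!\!\int_0^\infty\bigl[\tilde p(r+k,x,y)-\tilde p(r,x,y)\bigr]^2\,dy\,dr\le D_T k^{1/2},
\]
and the spatial one is not obtainable from a single off-the-shelf heat-kernel bound: the paper proves it by an exact evaluation of the cross term followed by \emph{two} separate estimates of the resulting quantity $J(t,x,h)$, one effective when $x\le h^{2/3}$ (giving a bound of order $h+x(x+h)/h$) and one effective when $x>h^{2/3}$ (giving a bound of order $h/x$), whose combination yields the exponent $1/3$. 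Without this two-regime analysis (or an equivalent computation) neither the $1/6$ spatial H\"older exponent nor, via Kolmogorov, the almost-sure existence of the boundary derivative in Part (2) is established; so as written the proposal is a correct architecture with its central lemma missing.
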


Theorem~\ref{Thm:sheatMain} is proved in Section~\ref{Sec:ProofsheatMain} using the integral regularities of a newly defined kernel $\tilde p(t,x,y)$ in Section~\ref{Sec:ProofsheatCalc} combined with a newly defined norm and an argument based on Kolmogorov's Continuity Theorem (see~\cite{bm}) and Lemma 3.3 of~\cite{walsh86} in Section~\ref{Sec:ProofsheatMain}.

\subsection{Integral Regularities of Kernel $\tilde p(t,x,y)$}\label{Sec:ProofsheatCalc}

The existence, uniqueness, and regularity of the solution described in Theorem~\ref{Thm:sheatMain} are based on a number of integral regularities of a newly defined kernel $\tilde p(t,x,y)$. We present and prove them in this section.

Define a new kernel $\tilde p(t,x,y)$ as
\[ \tilde p(t,x,y):=\frac yx p(t,x,y),\forall x>0 \]
and
\[ \tilde p(t,0,y):=\lim_{x\searrow 0}\tilde p(t,x,y)=y\frac{\partial p}{\partial x}(t,0,y), \]
then we have

\begin{lemma}\label{Thm:sheatCalc}
\begin{enumerate}
\item[(1)] $\forall x\ge0$,
\[ \int_0^\infty \tilde p^2(s,x,y)dy \le \frac{C}{\sqrt s};\]
\item[(2)] $\forall x,y\in[0,1],t\in[0,T]$,
\[ \int_0^t \int_0^\infty \left[\tilde p(s,x,z)-\tilde p(s,y,z)\right]^2dzds \le C_T(x-y)^{\frac 13};\]
\item[(3)] $\forall x\ge0,s,t\in[0,T]$,
\[ \int_s^t \int_0^\infty \tilde p^2(r,x,y)dydr+ \int_0^s \int_0^\infty \left[\tilde p(r+(t-s),x,y)-\tilde p(r,x,y)\right]^2dydr \le D_T|t-s|^{\frac 12}.\]
\end{enumerate}
\end{lemma}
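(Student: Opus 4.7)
The plan is to exploit the scaling symmetry of the Dirichlet heat kernel on the half-line. Since $p(s,x,y) = s^{-1/2} p(1, x/\sqrt s, y/\sqrt s)$, the rescaled kernel inherits the identical scaling
\[ \tilde p(s,x,y) \;=\; \frac{1}{\sqrt s}\, g\!\Big(\frac{x}{\sqrt s},\frac{y}{\sqrt s}\Big), \qquad g(v,u) := \frac{2u}{v\sqrt{4\pi}}\, e^{-(v^2+u^2)/4}\sinh\!\Big(\frac{vu}{2}\Big). \]
The preliminary observation is that $g$ extends to a smooth function on $[0,\infty)^2$: the expansion $\sinh(vu/2)/v = (u/2)\sum_{k\ge 0}(vu/2)^{2k}/(2k+1)!$ removes the apparent singularity at $v=0$, and all derivatives $\partial_v^j\partial_u^k g$ decay super-exponentially at infinity in either argument.

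From this smoothness I would record three uniform-in-$v$ estimates: (a) $M_0 := \sup_{v \ge 0} \int_0^\infty g^2(v,u)\,du < \infty$; (b) $M_1 := \sup_{v\ge 0} \int_0^\infty (\partial_v g)^2(v,u)\,du < \infty$; and (c) $M_2 := \sup_{v\ge 0} \int_0^\infty \Gamma^2(v,u)\,du < \infty$, where $\Gamma := g + v\partial_v g + u\partial_u g$ is the function that appears when one differentiates the scaling identity in $s$, so that $\partial_s \tilde p(s,x,y) = -\tfrac{1}{2} s^{-3/2}\,\Gamma(x/\sqrt s, y/\sqrt s)$. Each bound reduces to a weighted Gaussian moment computation using the explicit form of $g$.

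Part (1) is then immediate from (a) via the substitution $u = y/\sqrt s$. For part (2) I would split the $s$-integral at the threshold $s_0 := (x-y)^2$. On $[0,s_0]$ apply $(a-b)^2 \le 2a^2+2b^2$ together with part (1) to obtain a bound of $4M_0\sqrt{s_0} = 4M_0|x-y|$. On $[s_0,T]$ write $\tilde p(s,x,z) - \tilde p(s,y,z) = \int_y^x \partial_\xi \tilde p(s,\xi,z)\,d\xi$, apply Cauchy--Schwarz in $\xi$, and invoke the scaling identity $\int_0^\infty (\partial_\xi \tilde p)^2(s,\xi,z)\,dz = s^{-3/2}\int_0^\infty (\partial_v g)^2(\xi/\sqrt s, u)\,du \le M_1 s^{-3/2}$; the factor $|x-y|\int_{s_0}^T s^{-3/2}\,ds$ contributes another $O(|x-y|)$. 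Summing yields a bound of order $|x-y|$, which, on $[0,1]$, dominates $|x-y|^{1/3}$.

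For part (3), the first summand is controlled by part (1) alone: $\int_s^t r^{-1/2}\,dr = 2(\sqrt t - \sqrt s) \le 2\sqrt{t-s}$. For the second summand I apply the same split-at-$(t-s)$ scheme with $\delta := t-s$. On $[0,\delta]$ the triangle inequality plus part (1) gives an $O(\sqrt{\delta})$ bound. On $[\delta, s]$ I use (c) to write $\int_0^\infty (\partial_r \tilde p)^2(r,x,y)\,dy \le M_2 r^{-5/2}/4$, so Cauchy--Schwarz yields
\[ \int_\delta^s \!\!\int_0^\infty [\tilde p(r+\delta,x,y)-\tilde p(r,x,y)]^2\,dy\,dr \;\le\; \delta\!\int_\delta^s\!\!\int_r^{r+\delta} \tfrac{M_2}{4}(r')^{-5/2}\,dr'\,dr \;\le\; C\,\delta^{1/2}. \]
The main obstacle is the careful bookkeeping of the three uniform-in-$v$ $L^2_u$ bounds (a)--(c); once they are in hand, all three parts of the lemma follow from parallel dyadic-split arguments that separate the small-time regime (triangle inequality plus size estimate) from the large-time regime (derivative plus Cauchy--Schwarz), matching the two regimes at the natural parabolic threshold $s_0 = (x-y)^2$ or $\delta = t-s$.
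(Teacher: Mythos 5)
Your proposal is correct, but it proves the lemma by a genuinely different route than the paper. The paper computes the relevant Gaussian integrals explicitly (including the cross term $I_X$), reduces part (2) to the quantity $J(t,x,h)$, and then bounds $J$ by two separate elementary methods, one effective for $x\lesssim h^{2/3}$ and one for $x\gtrsim h^{2/3}$; the exponent $\tfrac13$ is exactly the value of the two bounds at their crossover. You instead exploit the parabolic scaling $\tilde p(s,x,y)=s^{-1/2}g(x/\sqrt s,y/\sqrt s)$ to reduce everything to three uniform-in-$v$ bounds on $\int_0^\infty(\cdot)^2\,du$ for $g$, $\partial_vg$, and $\Gamma=g+v\partial_vg+u\partial_ug$, and then run a single fundamental-theorem-of-calculus/Cauchy--Schwarz argument with one split at the parabolic threshold $s_0=(x-y)^2$ (resp.\ $\delta=t-s$). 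This is cleaner, and it actually buys more: your part (2) yields a bound of order $|x-y|$, strictly stronger than the stated $|x-y|^{1/3}$ (which follows since $|x-y|\le 1$ there). In other words, your argument shows the paper's exponent $\tfrac13$ is an artifact of the lossy two-method estimation (replacing $1-e^{-x^2/s}$ by $x^2/s$ on the whole range of $s$, and bounding a second difference of $\Phi$ by its first derivative), not a feature of the kernel.

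One point needs repair before the argument is complete. Your justification of (a)--(c) via ``all derivatives $\partial_v^j\partial_u^kg$ decay super-exponentially at infinity in either argument'' is false as stated: along the diagonal $u\approx v\to\infty$ one has $g(v,v)\to(4\pi)^{-1/2}$ (no decay at all), and $v\partial_vg$ and $u\partial_ug$ individually grow linearly in $v$ near the diagonal. The constants $M_0$, $M_1$, $M_2$ are nevertheless finite, but verifying this requires treating the regime $u=v+w$, $v\to\infty$, $w=O(1)$ separately: there $g$, $\partial_vg$, and the combination $\Gamma$ (after the cancellation between $v\partial_vg$ and $u\partial_ug$, which produces a factor $(u-v)^2u/v$ rather than $O(v)$) are all bounded with Gaussian decay in $w$, so the $L^2_u$ integrals remain uniformly bounded; near $v=0$ the odd symmetry of the image kernel gives $\partial_vg(0,u)=0$ and there is no singularity. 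Equivalently, for $M_2$ you can avoid the cancellation bookkeeping by using the heat equation $\partial_s\tilde p=\tfrac yx\partial_y^2p$ together with standard Gaussian derivative bounds. With that regime checked, all three parts go through exactly as you describe.
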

\begin{proof} We only need to prove the above facts for $[0,T]\times(0,1]$, since by Fatou's Lemma they can be extended to the cases for $x,y=0$.

Throughout the calculations we repeatedly use the following facts:
\begin{enumerate}
\item[(a)] \[ \int_0^\infty y^n \exp\left(-\frac{y^2}s\right)dy=\Gamma\left(\frac{n+1}2\right)s^{\frac{n+1}2};\]
\item[(b)] if $f$ and $g$ are even, then
\[\begin{split}
 &\int_0^\infty g(y)[f(y-x)+f(y+x)]dy=\frac12\int_{-\infty}^\infty g(y)[f(y-x)+f(y+x)]dy\\
 &=\frac12\int_{-\infty}^\infty [g(y-x)+g(y+x)]f(y)dy=\int_0^\infty [g(y-x)+g(y+x)]f(y)dy;
 \end{split}\]
\item[(c)] using the fact that $1-\exp(-x)\le 1\wedge x$, we have for $A>0$,
\[ \int_0^t \frac{1-\exp\left(-\frac{A}{s}\right)}{\sqrt{s}}ds\le\int_0^A \frac{ds}{\sqrt{s}}ds+\int_A^\infty \frac{Ads}{s\sqrt{s}}=4\sqrt{A}.\]
\end{enumerate}
Then
\begin{enumerate}
\item[(1)]
\[\begin{split}
 \int_0^\infty \tilde p^2(s,x,y)dy&=\frac{C_1}{x^2s}\int_0^\infty y^2\left[ e^{-\frac{(x+y)^2}s}+e^{-\frac{(x-y)^2}s}-2e^{-\frac{x^2+y^2}s}\right]dy\\
 &=\frac{C_1}{x^2s}\int_0^\infty \left[2(x^2+y^2)e^{-\frac{y^2}s}-2y^2e^{-\frac{x^2+y^2}s}\right]dy\\
 &=\frac{2C_1}s\int_0^\infty e^{-\frac{y^2}s}dy+\frac{2C_1}{x^2s}\left(1-e^{-\frac{x^2}s}\right)\int_0^\infty y^2e^{-\frac{y^2}s}dy\\
 &\le \frac{2C_1\Gamma\left(\frac12\right)}{\sqrt s}+ \frac{2C_1\Gamma\left(\frac32\right)}{\sqrt s}=\frac{3C_1\Gamma\left(\frac12\right)}{\sqrt s}.
 \end{split}\]
\item[(2)] Suppose $x\le y$, and define $h:=y-x$. Then from above,
\[ \int_0^\infty \left[\tilde p(s,x,z)-\tilde p(s,y,z)\right]^2dz=\frac{C_1\Gamma\left(\frac12\right)}{\sqrt{s}}\left[4+s\left(\frac{1-e^{-\frac{x^2}s}}{x^2}+\frac{1-e^{-\frac{(x+h)^2}s}}{(x+h)^2}\right)\right]-I_X\]
where
\[\begin{split}
 I_X&=\frac{2C_1}{x(x+h)s}\int_0^\infty z^2\left(e^{-\frac{(x-z)^2}{2s}}-e^{-\frac{(x+z)^2}{2s}}\right)\left(e^{-\frac{(x+h-z)^2}{2s}}-e^{-\frac{(x+h+z)^2}{2s}}\right)dz\\
 &=\frac{2C_1}{x(x+h)s}\int_0^\infty z^2\left[e^{-\frac{h^2}{4s}}\left(e^{-\frac{(x+\frac h2-z)^2}{s}}+e^{-\frac{(x+\frac h2+z)^2}{s}}\right)-e^{-\frac{\left(x+\frac h2\right)^2}s}\left(e^{-\frac{\left(z+\frac h2\right)^2}s}+e^{-\frac{\left(z-\frac h2\right)^2}s}\right)\right]dz\\
 &=\frac{4C_1}{x(x+h)s}\left\{e^{-\frac{h^2}{4s}}\int_0^\infty \left[z^2+\left(x+\frac h2\right)^2\right]e^{-\frac{z^2}s}dz-e^{-\frac{\left(x+\frac h2\right)^2}s}\int_0^\infty \left[z^2+\frac {h^2}4\right]e^{-\frac{z^2}s}dz\right\}\\
 &=\frac{2C_1\Gamma\left(\frac12\right)}{x(x+h)\sqrt{s}}\left\{e^{-\frac{h^2}{4s}}\left[s+2\left(x+\frac h2\right)^2\right]-e^{-\frac{\left(x+\frac h2\right)^2}s}\left[s+2\left(\frac {h^2}4\right)\right]\right\}\\
 &=\frac{C_1\Gamma\left(\frac12\right)}{\sqrt{s}}e^{-\frac{h^2}{4s}}\left\{4+\frac{1-e^{-\frac{x(x+h)}s}}{x(x+h)}(2s+h^2)\right\}\ge\frac{C_1\Gamma\left(\frac12\right)}{\sqrt{s}}e^{-\frac{h^2}{4s}}\left\{4+2s\frac{1-e^{-\frac{x(x+h)}s}}{x(x+h)}\right\}.
 \end{split}\]
Note that from (c) above, we get
\[ \int_0^t \frac{1-e^{-\frac{h^2}{4s}}}{\sqrt{s}}ds \le 2h.\]
Then we need to consider
\[ J(t,x,h):=\int_0^t \left\{ \sqrt{s}\left[\frac{1-e^{-\frac{x^2}s}}{x^2}+\frac{1-e^{-\frac{(x+h)^2}s}}{(x+h)^2}-2e^{-\frac{h^2}{4s}}\left(\frac{1-e^{-\frac{x(x+h)}s}}{x(x+h)}\right)\right]\right\}ds.\]
Now we use two methods to bound $J(t,x,h)$.
\begin{enumerate}
\item[(I)]Using the fact that $x-\frac{x^2}2\le 1-\exp(-x)\le x$, we get
\[J(t,x,h)\le\int_0^t \frac{2}{\sqrt{s}}\left(1-e^{-\frac{h^2}{4s}}\right)ds+x(x+h)\int_0^t\frac{e^{-\frac{h^2}{4s}}}{s\sqrt{s}}ds\le4h+\Gamma\left(\frac12\right)\frac{x(x+h)}h.\]
\item[(II)]Define a function
\[ \Phi(t,x):=\int_0^t\frac{1-\exp\left(-\frac{x^2}s\right)}{x^2}ds=\frac{t}{x^2}-\int_{\frac{x^2}t}^\infty u^{-2}e^{-u}du.\]
Then
\[ \left|\frac{\partial \Phi}{\partial x}(t,x)\right|=2t\frac{1-e^{-\frac{x^2}t}}{x^3}\le \frac 2x.\]
Then we have
\[\begin{split}
 J(t,x,h)&\le \left[\Phi(t,x)+\Phi(t,x+h)-2\Phi\left(t,x+\frac h2\right)\right]+\frac{2}{x(x+h)}\int_0^t\sqrt{s}\left(1-e^{-\frac{h^2}{4s}}\right)ds\\
 &\le \frac h2 \left(\frac 2x+\frac 2{x+\frac h2}\right)+ \frac{h^2\sqrt{t}}{x(x+h)}\le\frac{2h}{x}+ \frac{h^2\sqrt{t}}{x(x+h)}.
 \end{split}\]
\end{enumerate}
Combining (I) and (II), we have that for some $C_T>0$, when $x\le h^{2/3}$, using (I) and we get $\int_0^\infty \left[\tilde p(s,x,z)-\tilde p(s,y,z)\right]^2dz\le C_T h^{1/3}$; when $x> h^{3/2}$, using (II) and we get $\int_0^\infty \left[\tilde p(s,x,z)-\tilde p(s,y,z)\right]^2dz\le C_T h^{1/3}$.

\item[(3)] Suppose $s\le t$, and define $k:=t-s$. The first integral on the left hand side is bounded by $C/\sqrt{k}$ using (1). Now, from (1),
\[ \int_0^s \int_0^\infty \left[\tilde p(r+k,x,y)-\tilde p(r,x,y)\right]^2dy=3C_1\Gamma\left(\frac12\right)\left(\sqrt{s+k}-\sqrt{k}+\sqrt{s}\right)-I\]
where (defining $k':=\frac{rk}{2r+k}$)
\[\begin{split}
 I&=\int_0^s \frac{2C_1}{x^2\sqrt{r(r+k)}}\int_0^\infty y^2\left[e^{-\frac{(x-y)^2}{2(r+k)}}-e^{-\frac{(x+y)^2}{2(r+k)}}\right]\left[e^{-\frac{(x-y)^2}{2r}}-e^{-\frac{(x+y)^2}{2r}}\right] dydr \\
 &=\int_0^s \frac{2C_1}{x^2\sqrt{r(r+k)}}\int_0^\infty y^2\left[e^{-\frac{(x-y)^2}{r+k'}}+e^{-\frac{(x+y)^2}{r+k'}}-e^{-\frac{x^2}{r+\frac k2}}\left(e^{-\frac{\left(y-\frac{k}{2r+k}x\right)^2}{r+k'}}+e^{-\frac{\left(y+\frac{k}{2r+k}x\right)^2}{r+k'}}\right)\right]dydr\\
 &=\int_0^s \frac{4C_1}{x^2\sqrt{r(r+k)}}\left[\int_0^\infty (x^2+y^2)e^{-\frac{y^2}{r+k'}}dy-e^{-\frac{x^2}{r+\frac k2}}\int_0^\infty\left(\frac{k^2}{(2r+k)^2}x^2+y^2\right)e^{-\frac{y^2}{r+k'}} dy\right]dr\\
 &=2C_1\Gamma\left(\frac12\right)\int_0^s\frac{dr}{\sqrt{r(r+k)}}\left\{2\left[1-\frac{k^2}{(2r+k)^2}e^{-\frac{x^2}{r+\frac k2}}\right]\sqrt{r+k'}+\frac{1-e^{-\frac{x^2}{r+\frac k2}}}{x^2}(r+k')^{\frac32}\right\}\\
 &\ge3C_1\Gamma\left(\frac12\right)\int_0^s \frac1{r+\frac k2}\frac{r(r+k)}{\left(r+\frac k2\right)^\frac32}dr\ge3C_1\Gamma\left(\frac12\right)\int_0^s \frac{rdr}{(r+k)^\frac32}.
 \end{split}\]
Therefore we have for some $D_T>0$
\[ \int_0^s \int_0^\infty \left[\tilde p(r+k,x,y)-\tilde p(r,x,y)\right]^2dy\le D_T k^\frac12. \Box\]
\end{enumerate}
\end{proof}

\subsection{Proof of the Main Theorem}\label{Sec:ProofsheatMain}

Theorem~\ref{Thm:sheatMain} is proved in two steps. First we show that the solution to~\eqref{Eq:sheatDiff} exists and is unique in a normed space, where the norm is defined so that in the second step the regularity results of the solution can be derived by using the defined norm and the integral regularities of the kernel $\tilde p(s,x,y)$ in Lemma~\ref{Thm:sheatCalc}. In other words, the norm we defined in the next part characterizes the essential component from which we derive the desired H\"{o}lder continuity of the solution and its differentiability at the boundary, which are shown by using Kolmogorov's Continuity Theorem in the second step.

\subsubsection{Existence and Uniqueness of the Solution}
Existence and uniqueness of the solution is shown by a Picard-type iteration. Consider the iteration
\begin{equation}\label{Eq:sheatItrU} u_{n+1}(t,x)=\int_0^\infty p(t,x,y)u_0(y)dy + \int_0^t\int_0^\infty p(t,x,y)u_n(s,y)W(dyds).
\end{equation}
Or, if we define
\[ v_n(t,x):=\frac{u_n(t,x)}x, v_0(x):=\frac{u_0(x)}x,\]
then
\begin{equation}\label{Eq:sheatItrV}
 v_{n+1}(t,x)=\int_0^\infty \tilde p(t,x,y)v_0(y)dy + \int_0^t\int_0^\infty \tilde p(t,x,y)v_n(s,y)W(dyds).
 \end{equation}
Fix $p\ge 1$. Suppose $\{f(x)\}_{x\ge 0}$ is a stochastic process. Define a norm
\[ \|f\|_{2p}:=\sup_{x>0} \mathbb E\left[ f^{2p}(x) \right]. \]

Then we have
\begin{lemma}
For all $p\ge1$, the solution $v(t,x)$ to~\eqref{Eq:sheatItrV} exists and is unique in the space defined by the norm $\|\cdot\|_{2p}$.
\end{lemma}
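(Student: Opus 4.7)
The plan is to run a Picard iteration on~\eqref{Eq:sheatItrV}, using Lemma~\ref{Thm:sheatCalc}(1) to close the recursion in the norm $\|\cdot\|_{2p}$; the integrable time-singularity $\int_0^\infty \tilde p^2(s,x,y)\,dy \le C/\sqrt s$, moreover uniform in $x$, is exactly what is needed. (I read the stochastic-integral kernel in~\eqref{Eq:sheatItrV} as $\tilde p(t-s,x,y)$, which is evidently the intent.) The key step is to form the difference
\[ v_{n+1}(t,x)-v_n(t,x)=\int_0^t\!\!\int_0^\infty \tilde p(t-s,x,y)\bigl[v_n(s,y)-v_{n-1}(s,y)\bigr]\,W(dy\,ds), \]
apply the Burkholder--Davis--Gundy inequality (via Lemma~3.3 of~\cite{walsh86}), and then use Jensen's inequality against the positive measure $\tilde p^2(t-s,x,y)\,dy\,ds$ of total mass at most $C\sqrt t$ (by Lemma~\ref{Thm:sheatCalc}(1)) to pull the $p$-th power inside. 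Taking $\sup_x$ on the outside yields the Volterra-type recursion
\[ a_{n+1}(t)\le K_{p,T}\int_0^t \frac{a_n(s)}{\sqrt{t-s}}\,ds,\qquad a_n(t):=\sup_{x>0}\mathbb E\bigl[(v_n(t,x)-v_{n-1}(t,x))^{2p}\bigr]. \]

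Iterating via the Beta-function identity $\int_0^t(t-s)^{-1/2}s^{\alpha}\,ds=\Gamma(1/2)\Gamma(\alpha+1)/\Gamma(\alpha+3/2)\,t^{\alpha+1/2}$ gives by induction
\[ a_n(t)\le \frac{(K_{p,T}\Gamma(1/2))^{n-1}}{\Gamma((n+1)/2)}\,t^{(n-1)/2}\sup_{s\le T}a_1(s), \]
a Mittag--Leffler-type bound that is summable for every $T<\infty$. The base estimate $\sup_{t\le T}a_1(t)<\infty$ follows from a direct application of Lemma~\ref{Thm:sheatCalc}(1) to the free term $\int_0^\infty\tilde p(t,x,y)v_0(y)\,dy$ under a mild boundedness assumption on $v_0$. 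Hence $\{v_n(t,\cdot)\}$ is Cauchy in $\|\cdot\|_{2p}$ uniformly in $t\in[0,T]$, and its limit $v(t,x)$ solves~\eqref{Eq:sheatItrV}. Uniqueness follows from the same recursion applied to the difference of two putative solutions, invoking the Volterra--Gr\"onwall lemma for the kernel $(t-s)^{-1/2}$.

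The main obstacle I foresee is the BDG/Jensen bookkeeping when $p>1$: one must keep the $\sup_x$ strictly outside throughout, so that the recursion stays in the norm $\|\cdot\|_{2p}$ rather than degenerating into a strictly weaker, $x$-averaged estimate. This closes only because the bound in Lemma~\ref{Thm:sheatCalc}(1) is uniform in $x\ge 0$, which is exactly the payoff of introducing the reweighted kernel $\tilde p(t,x,y)=(y/x)p(t,x,y)$ in place of the Dirichlet heat kernel $p$. All of the analytic content of the argument is thereby shifted onto Lemma~\ref{Thm:sheatCalc}(1), which is already established; the remainder is standard Picard bookkeeping.
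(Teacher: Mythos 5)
Your proposal is correct and follows essentially the same route as the paper: the same difference of Picard iterates, Burkholder plus Jensen against $\tilde p^2(t-s,x,y)\,dy\,ds$, Lemma~\ref{Thm:sheatCalc}(1) to produce the Volterra recursion with kernel $(t-s)^{-1/2}$, and summability of the resulting series. The only cosmetic difference is that you carry out the Beta-function/Mittag--Leffler iteration explicitly where the paper simply cites Lemma~3.3 of~\cite{walsh86}, which encapsulates exactly that computation; you also correctly identify the kernel in~\eqref{Eq:sheatItrV} as $\tilde p(t-s,x,y)$.
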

\begin{proof} We have
\[ \frac{u_{n+1}(t,x)-u_n(t,x)}x= \int_0^t\int_0^\infty \left[ \frac{u_n(s,y)-u_{n-1}(s,y)}y \right]\tilde p(t-s,x,y) W(dyds)\]
or equivalently,
\[ v_{n+1}(t,x)-v_n(t,x)= \int_0^t\int_0^\infty \left[ v_n(s,y)-v_{n-1}(s,y) \right]\tilde p(t-s,x,y) W(dyds).\]

Define $H_n(t):=\|v_n(t,\cdot)-v_{n-1}(t,\cdot)\|_{2p}$.
Then we have from Lemma~\ref{Thm:sheatCalc} (1) that
\[\begin{split}
 H_{n+1}(t) &\le \sup_{x>0} C_p \mathbb E\left[ \int_0^t\int_0^\infty \left( v_n(s,y)-v_{n-1}(s,y) \right)^2p^2(t-s,x,y) dyds\right]^p\\
 &\le \sup_{x>0} C_p \left[ \int_0^t\int_0^\infty p^2(t-s,x,y) dyds\right]^{p-1} \int_0^t\int_0^\infty \mathbb E\left[\left( v_n(s,y)-v_{n-1}(s,y) \right)^{2p}\right]p^2(t-s,x,y) dyds\\
 &\le C_p' t^{\frac{p-1}2} \int_0^t \frac{H_n(s)}{\sqrt{t-s}} ds\le C_{p,T} \int_0^t \frac{H_n(s)}{\sqrt{t-s}} ds.
 \end{split}\]
 where the first inequality comes from Burkholder inequality, the second inequality comes from Jensen's inequality that for $p\ge 1$,
 \[ \left( \frac{\int ab}{\int b}\right)^p \le \frac{\int a^pb}{\int b}\Leftrightarrow \left(\int ab\right)^p \le \left(\int b\right)^{p-1}\int a^pb,\]
 and the third inequality comes from Lemma~\ref{Thm:sheatCalc} (1).
By Lemma 3.3 of~\cite{walsh86},
\[ \sum_n H_n(t) <\infty \] and the convergence is uniform on compacts. That is, if $0\le t\le T$, then $\sum_n H_n(t)<C_{p,T}<\infty$ where $C_{p,T}$ is a constant dependent only on $T$. This gives immediately by Picard-type iteration that the solution $v(t,x)$ exists and is unique with respect to $\|\cdot\|_{2p}$. Moreover,
\[ \|v(t,\cdot)\|_{2p}\le \|v_0\|_{2p}+\sum_n H_n(t)<D_{p,T}<\infty. \Box\]
\end{proof}

Define $u(t,x):=xv(t,x)$, then $u(t,x)$ is the unique solution to~\eqref{Eq:sheatItrU}.

\subsubsection{Regularity of the Solution and Differentiability at the Boundary}
In this part we shall prove the differentiability of the solution at the boundary by giving a $\mathbb P$-a.s. limit or the spatial derivative at the boundary. This is shown by using Kolmogorov's Continuity Theorem combined with the integral regularities shown in the previous parts and the existence of the solution under the $\|\cdot\|_{2p}$ norm. In fact, a stronger result is shown, namely, the solution is $\mathbb P$-a.s. H\"{o}lder continuous with parameters $\left(\frac14-\epsilon,\frac16-\epsilon\right)$ on $[0,T]\times[0,1]$ for $\epsilon>0$, which can also be used to evaluate the impact of its spatial regularity on the boundary shift effect required in the stochastic Stefan problems drive by space-time Brownian noise.

\begin{lemma}
$\mathbb P$-a.s., for all $t\in[0,T]$ the solution $v(t,x)$ to~\eqref{Eq:sheatItrV} is continuous at $x=0$ and
\[\frac{\partial u}{\partial x}(t,0)= \lim_{x\searrow 0} v(t,x) = \int_0^\infty \frac{\partial p}{\partial x}(t,0,y)u_0(y)dy + \int_0^t\int_0^\infty \frac{\partial p}{\partial x}(t,0,y) u(s,y)W(dyds).\]
Moreover, for $t\in[0,T]$ define $v(t,0):=\lim_{x\searrow 0} v(t,x)$, then $\mathbb P$-a.s.,
$v(t,x)$ is $\left(\frac14-\epsilon,\frac16-\epsilon\right)$-H\"{o}lder continuous on $[0,T]\times[0,1]$ for $\epsilon>0$.
\end{lemma}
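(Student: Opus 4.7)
The plan is to apply a two-parameter Kolmogorov continuity theorem to $v(t,x)$ on the rectangle $[0,T]\times(0,1]$, using the kernel regularity estimates of Lemma~\ref{Thm:sheatCalc} to supply the required joint moment bound, and then to extend $v$ continuously to the boundary $x=0$ and identify its boundary value with the stochastic integral claimed in the statement.

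First I would write the increment of $v$ as three pieces obtained from its evolution equation. For $0\le t_1\le t_2\le T$ and $x_1,x_2\in(0,1]$,
\begin{align*}
v(t_2,x_2)-v(t_1,x_1) &= \int_0^\infty [\tilde p(t_2,x_2,z)-\tilde p(t_1,x_1,z)]v_0(z)\,dz \\
&\quad + \int_0^{t_1}\!\!\int_0^\infty [\tilde p(t_2-r,x_2,z)-\tilde p(t_1-r,x_1,z)]v(r,z)\,W(dz\,dr) \\
&\quad + \int_{t_1}^{t_2}\!\!\int_0^\infty \tilde p(t_2-r,x_2,z)v(r,z)\,W(dz\,dr).
\end{align*}
Taking $2p$-th moments, applying Burkholder--Davis--Gundy to the two stochastic integrals and Jensen exactly as in the proof of the previous lemma, and exploiting the uniform bound $\sup_{r\le T}\|v(r,\cdot)\|_{2p}\le D_{p,T}$, I reduce everything to $p$-th powers of the deterministic kernel integrals already controlled by Lemma~\ref{Thm:sheatCalc}(2)--(3). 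This should yield
\[
\mathbb E\bigl[(v(t_2,x_2)-v(t_1,x_1))^{2p}\bigr]\le C_{p,T}\bigl(|x_1-x_2|^{p/3}+|t_1-t_2|^{p/2}\bigr).
\]

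Next I would feed this bound into the anisotropic two-parameter Kolmogorov continuity theorem (\cite{bm}): a process satisfying $\mathbb E[|X(t,x)-X(s,y)|^q]\le C(|t-s|^{1+a}+|x-y|^{1+b})$ admits a modification that is jointly H\"older with exponents arbitrarily close to $a/q$ in $t$ and $b/q$ in $x$ on compacts. With $q=2p$, $a=p/2-1$, $b=p/3-1$ and $p\to\infty$, these exponents approach $\tfrac14$ in $t$ and $\tfrac16$ in $x$, delivering the claimed $(\tfrac14-\epsilon,\tfrac16-\epsilon)$-H\"older regularity on $[0,T]\times(0,1]$. Because the modification is jointly continuous off a single $\mathbb P$-null set, the spatial limit $v(t,0):=\lim_{x\searrow 0}v(t,x)$ exists for every $t\in[0,T]$ simultaneously, and the extended function is H\"older on $[0,T]\times[0,1]$.

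Finally, passing $x\searrow 0$ inside the evolution equation for $v$ (Lemma~\ref{Thm:sheatCalc}(1) together with the uniform $\|v(r,\cdot)\|_{2p}$ bound justifies the interchange via dominated convergence in $L^2$) gives
\[
v(t,0)=\int_0^\infty \tilde p(t,0,z)v_0(z)\,dz+\int_0^t\!\!\int_0^\infty \tilde p(t-r,0,z)v(r,z)\,W(dz\,dr),
\]
and substituting $\tilde p(t,0,z)=z\,\partial p/\partial x(t,0,z)$, $u_0(z)=zv_0(z)$, $u(r,z)=zv(r,z)$ recovers the stated identity. Because $u(t,0)=0$, differentiability of $u$ at $0$ and the value $\partial u/\partial x(t,0)=v(t,0)$ follow immediately. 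The main obstacle I anticipate is the moment estimate: handling the three pieces of the increment simultaneously so that the anisotropic decay of $\tilde p$ in $t$ and $x$ is exploited with the correct exponents $p/2$ and $p/3$; once that bound is secured, the Kolmogorov application and the boundary identification are routine. A subtler point, but one handled automatically by the joint continuity, is that the exceptional null set is uniform in $t\in[0,T]$ rather than $t$-dependent.
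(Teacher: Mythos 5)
Your proposal matches the paper's proof in all essentials: the paper likewise reduces to the stochastic-integral term, bounds the spatial and temporal increments of $v$ via Burkholder and Jensen together with the uniform $\|v(t,\cdot)\|_{2p}$ bound and Lemma~\ref{Thm:sheatCalc}(1)--(3) (splitting the time increment into a kernel-difference part on $[0,s]$ and a tail part on $[s,t]$, exactly as you do), obtains $\mathbb E[(v(t,x)-v(s,y))^{2p}]\le M_{p,T}(x-y)^{p/3}+N_{p,T}(t-s)^{p/2}$, and applies Kolmogorov's Continuity Theorem with $p\to\infty$ to get the $(\tfrac14-\epsilon,\tfrac16-\epsilon)$-H\"older continuity and the identification of the boundary limit. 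The only cosmetic difference is that you carry the deterministic initial-data term explicitly, whereas the paper dismisses it with ``we only need to consider the Brownian term.''
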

\begin{proof}
We only need to consider the Brownian term. Fix $p\ge 1$. For $t\in [0,T], x>0$, define
\[ I_1(t,x):=\frac 1x \int_0^t\int_0^\infty u(s,y)p(t-s,x,y)W(dyds)=\int_0^t\int_0^\infty v(s,y)\tilde p(t-s,x,y)W(dyds),\]
and
\[ I_1(t,0):=\int_0^t\int_0^\infty u(s,y)\frac{\partial p}{\partial x}(t-s,0,y)W(dyds)=\int_0^t\int_0^\infty v(s,y)\tilde p(t-s,0,y)W(dyds).\]
Then for $t\in [0,T], 0\le x,y\le 1$, we have
\[\begin{split}
 \mathbb E \left[ (I_1(t,x)-I_1(t,y))^{2p} \right] &\le C_p \mathbb E \left[ \int_0^t \int_0^\infty v^2(s,z) \left[\tilde p(t-s,x,z)-\tilde p(t-s,y,z)\right]^2 dzds \right]^p\\
 &\le C_p\left[ \int_0^t \int_0^\infty \left[\tilde p(t-s,x,z)-\tilde p(t-s,y,z)\right]^2 dzds \right]^{p-1}\cdot \\
 &\quad\quad\quad\quad  \int_0^t \int_0^\infty\mathbb E \left[ v^{2p}(s,z) \right] \left[\tilde p(t-s,x,z)-\tilde p(t-s,y,z)\right]^2 dzds\\
 &\le C_pD_{p,T} \left[ \int_0^t \int_0^\infty \left[\tilde p(t-s,x,z)-\tilde p(t-s,y,z)\right]^2 dzds \right]^p\\
 &\le C_{p,T}' (x-y)^{\frac{p}3}
 \end{split}\]
 where the first inequality comes from Burkholder inequality, the second comes from Jensen's inequality, the third comes from the previous part about uniform boundedness of $\|u(t,\cdot)\|_{2p}$ on $[0,T]$, and the last one comes from Lemma~\ref{Thm:sheatCalc} (2).

Similarly, We also have for $0<s\le t\le T, x\in [0,1]$,
\[ \mathbb E \left[ (I_1(t,x)-I_1(s,x))^{2p} \right] \le 2^{2p-1} [I_{11}(s,t,x)+I_{12}(s,t,x)]\]
where
\[\begin{split}
 I_{11}(s,t,x)&:=\mathbb E\left[ \left(\int_0^s \int_0^\infty v(r,y) \left[\tilde p(t-r,x,y)-q_{s-r}(x,y)\right]W (dydr) \right)^{2p}\right]\\
 I_{12}(s,t,x)&:=\mathbb E\left[\left(\int_s^t \int_0^\infty v(r,y) \tilde p(t-r,x,y)W (dydr) \right)^{2p}\right]
 \end{split}\]
by Jensen's inequality $\left(\frac{a+b}2\right)^{2p}\le \frac{a^{2p}+b^{2p}}2$. Now,
\[\begin{split}
 I_{11}(s,t,x)&\le C_p \mathbb E\left[ \left(\int_0^s \int_0^\infty v^2(r,y) \left[\tilde p(t-r,x,y)-\tilde p(s-r,x,y)\right]^2dydr \right)^p\right]\\
 &\le C_p \left[\int_0^s \int_0^\infty \left[\tilde p(t-r,x,y)-\tilde p(s-r,x,y)\right]^2dydr \right]^{p-1}\cdot\\
 &\quad\quad\quad\quad \int_0^s\int_0^\infty \mathbb E \left[ v^{2p}(r,y) \right] \left[\tilde p(t-r,x,y)-\tilde p(s-r,x,y)\right]^2dydr\\
 &\le C_{1,p,T} (t-s)^{\frac p2}
 \end{split}\]
 using Burkholder's inequality, Jensen's inequality, and Lemma~\ref{Thm:sheatCalc} (3). Similarly, using  Burkholder's inequality, Jensen's inequality, and Lemma~\ref{Thm:sheatCalc} (1), we have
\[ I_{12}(s,t,x)\le C_{2,p,T} (t-s)^{\frac p2}.\]

Summing things up, for all $p\ge 1$, we have for $0\le s,t\le T, 0\le x,y\le 1$,
\[ \mathbb E \left[ (I_1(t,x)-I_1(s,y))^{2p} \right]\le M_{p,T}(x-y)^{\frac {p}3} + N_{p,T} (t-s)^{\frac p2}.\]
By Kolmogorov Continuity Theorem, the above calculations imply that $\mathbb P$ almost surely, $I_1(t,x)$ is $(\gamma,\beta)$-H\"{o}lder continuous on $[0,T]\times[0,1]$ for all $\gamma<1/4, \beta<1/6$. This implies that $I_1(t,0)=\lim_{x\searrow 0} I_1(t,x)$, and that $I_1(t,0)$ is $\gamma$-Holder continuous on $[0,T]$ for all $\gamma<1/4$. $\Box$
\end{proof}

\section{A Stochastic Stefan Problem with Space-Time Brownian Noise}\label{Ch:sspw}

In the previous section we considered the stochastic heat equation of $u$ driven by a multiplicative space-time Brownian noise $u\frac{\partial^2 W}{\partial t \partial x}$ and showed the existence and uniqueness of the solution under normed space defined by $\|\cdot\|_{2p}$ for all $p\ge 1$ and the $\mathbb P$-a.s. differentiability at the boundary and as a by-product, the H\"{o}lder continuity of $u$. The proof provides us with important hints on how to prove the existence and uniqueness of a stochastic Stefan problem driven by space-time Brownian noise, since we shall also prove that the Stefan boundary condition holds simultaneously at the moving boundary.

Fix a probability space $(\Omega,\mathcal F, \mathbb P)$, and suppose $W:\Omega\times\mathbb R\times\mathbb R\to \mathbb R$ is the standard 2-dimensional Brownian sheet. Then the stochastic Stefan problem of $u(t,x)$ has the general (formal) form
\[\begin{split}
 \frac{\partial u}{\partial t}&=\frac{\partial^2 u}{\partial t^2}+ \sigma(t,x,u(t,x))\frac{\partial^2 W}{\partial t\partial x},\forall x>\beta(t),\\
u(t,x)&=0,\forall x\le \beta(t),\\
u(0,x)&=u_0(x), \\
\rho \dot \beta(t) &=\lim_{x\searrow \beta(t)} \frac{u(t,x)}x, \mathbb P\textrm{-a.s.}\\
 \end{split}\]
 for $0\le t<\tau$ where $\tau$ is some well defined stopping time.

The introduction of a moving boundary brings about a number of challenges that do not exist in the stochastic heat equation problem, and one of them is to control the boundary shift effect, namely, if we iterate on the boundary $\beta_n(t)$, then $d_n(t):=\beta_{n+1}(t)-\beta_n(t)$ is the shift of the boundary between iterations. We look to control
\[ \int_0^t \int_0^\infty \left[\tilde p(t-s,x,y+d_n(s))\frac{\sigma(s,y+d_n(s),u(s,y+d_n(s)))}{y+d_n(s)}-\tilde p(t-s,x,y)\frac{\sigma(s,y,u(s,y))}y\right] W(dyds)\]
whose variance is
\[ \int_0^t \int_0^\infty \left[\tilde p(t-s,x,y+d_n(s))\frac{\sigma(s,y+d_n(s),u(s,y+d_n(s)))}{y+d_n(s)}-\tilde p(t-s,x,y)\frac{\sigma(s,y,u(s,y))}y\right]^2 dyds.\]

For multiplicative noise where $\sigma(t,x,u)=u$, the variance above in terms of $d_n$ is completely determined by the spatial regularity of $u$ or $v:=u/x$. From the previous section we know that spatially $v$ is continuous only at the boundary and has $\left(\frac16-\epsilon\right)$-H\"{o}lder continuity for $\epsilon>0$ on $[0,1]$. To evaluate the impact of the spatial regularity on the above variance in terms of $d_n$, we let $\sigma(t,x,u):=x$, and a calculation with two methods similar to Lemma~\ref{Thm:sheatCalc} (2) shows that
\[ \int_0^t \int_0^\infty \left[\tilde p(t-s,x,y+d)-\tilde p(t-s,x,y)\right]^2 dyds\le C_T'd^{\frac23}.\]
Although the tightness of the inequality is not justified, it still gives strong evidence that it would be difficult to have an iteration on $d_n$, even if $\sigma$ is as smooth as $\sigma=x$. Therefore, instead of working with a multiplicative noise, we in this section consider a stochastic Stefan problem drive by scaled space-time Brownian noise $\sigma(x)\frac{\partial^2 W}{\partial t\partial x}$ where $\sigma(x)$ satisfies certain regularity conditions:

\begin{definition} A function $\sigma:[0,\infty)\to\mathbb R$ is a \emph{regular scaling function} if $\sigma(x)$ is Lipschitz in $x$ and $\sigma(x)\sim Ax^\alpha$ at $0$ with $\alpha>\frac32$. Equivalently, $|\sigma(x)|\le \min\{Ax^\alpha,Bx\}$ for some $A,B>0$.
\end{definition}

 Note that $Ax^\alpha$ provides enough smoothness to control the boundary shift effect as iterating on $\dot\beta_n(t)$ below and $Bx$ provides enough control as $x\to\infty$ for other parts so that the noise does not grow too large at infinity.

\subsection{Problem and Main Theorem}
Suppose $\sigma: [0,\infty)\to \mathbb R$ is a regular scaling function. Then we consider the stochastic Stefan problem of $u(t,x)$
\begin{equation}\label{Eq:SSPW}
\begin{split}
 \frac{\partial u}{\partial t}&=\frac{\partial^2 u}{\partial t^2}+ \sigma(x-\beta(t))\frac{\partial^2 W}{\partial t\partial x},\forall x>\beta(t),\\
u(t,x)&=0,\forall x\le \beta(t),\\
u(0,x)&=u_0(x), \\
\rho \dot \beta(t) &=\lim_{x\searrow \beta(t)} \frac{u(t,x)}x,\mathbb P\textrm{-a.s.}\\
 \end{split}\end{equation}
for $0\le t<\tau$ for some well defined stopping time $\tau$.

As before we work on a weak formulation of~\eqref{Eq:SSPW} and its transformed equivalent evolution equation. Unlike in the colored noise case, we do not directly have the differentiability of the solution at the boundary, so in the iteration of the boundary $\dot\beta$ we cannot simply let it be the spatial derivative of the solution at the boundary, but the one similar to what was obtained in the stochastic heat equation, and finally we have an extra step to show that the two coincide $\mathbb P$-a.s., or equivalently, the Stefan boundary condition in~\eqref{Eq:SSPW} holds.

\begin{theorem}\label{Thm:SSPWMain} The solution $u(t,x),\beta(t)$ to~\eqref{Eq:SSPW} exists and is unique for $0\le t <\tau:=\lim_{L\to\infty}\tau^L$ where $\tau^L:=\inf\{t\in\mathbb R_+\cup\{0\}: |\dot \beta(t)|\ge L\}$. Moreover, $\mathbb P$-a.s., $\beta\in C^1([0,\tau))$ and $\dot\beta(t)$ is $\left(\frac14-\epsilon\right)$-H\"{o}lder continuous for $\epsilon>0$.
\end{theorem}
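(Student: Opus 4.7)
The plan is to combine the truncation-and-relaxation architecture of Section~\ref{Ch:sspcolored} with the normed-space and kernel-regularity techniques of Section~\ref{Ch:sheat}. First I would apply the change of variables $\tilde u(t,x) := u(t,x+\beta(t))$ as in Section~\ref{Ch:sspcolored}, which produces an evolution equation on the fixed half-line of the form (schematically)
\[\tilde u(t,x) = \int_0^\infty p(t,x,y)u_0(y+\beta(0))\,dy + \int_0^t\!\!\int_0^\infty \dot\beta(s)\,q(t-s,x,y)\tilde u(s,y)\,dy\,ds + \int_0^t\!\!\int_0^\infty p(t-s,x,y)\sigma(y)\,\widetilde W(dy\,ds),\]
where $\widetilde W$ denotes the sheet pulled back through the random shift $\beta$. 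As in Section~\ref{Ch:sspcolored}, multiply $\dot\beta$ by the cutoff $\Psi_L$ to obtain the truncated problem $(\tilde u^L,\beta^L)$.

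Next, set up a Picard-type iteration on the pair $(\tilde u^L_n,\dot\beta^L_n)$. Given $(\tilde u^L_n,\beta^L_n)$, I would define
\[\rho\,\dot\beta^L_{n+1}(t) := \Psi_L(\cdot)\left[\int_0^\infty \tfrac{\partial p}{\partial x}(t,0,y)u_0(y+\beta^L_n(0))\,dy + (\text{drift})+\int_0^t\!\!\int_0^\infty \tfrac{\partial p}{\partial x}(t-s,0,y)\sigma(y)\,\widetilde W^{\beta^L_n}(dy\,ds)\right]\]
patterned on the limit formula in Theorem~\ref{Thm:sheatMain}(2), and then let $\tilde u^L_{n+1}$ solve the evolution equation with $(\dot\beta^L_n,\beta^L_n)$ substituted. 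Passing to $v^L_n := \tilde u^L_n/x$ and the norm $\|\cdot\|_{2p}$ from Section~\ref{Ch:sheat}, the iteration errors split into an initial-condition shift, a drift contribution (which is bounded because of the cutoff $\Psi_L$), and the stochastic-integral boundary-shift term
\[\int_0^t\!\!\int_0^\infty\!\left[\tilde p(t-s,x,y+d_n(s))\tfrac{\sigma(y+d_n(s))}{y+d_n(s)}-\tilde p(t-s,x,y)\tfrac{\sigma(y)}{y}\right]^2\!dy\,ds,\]
with $d_n := \beta^L_{n+1}-\beta^L_n$.

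The main obstacle is controlling this last term. The preliminary calculation at the start of Section~\ref{Ch:sspw} for $\sigma(y)=y$ gave only $d^{2/3}$, which is too weak to close a contraction. This is exactly what the regular scaling hypothesis $\sigma(y)\sim Ay^\alpha$ with $\alpha>\tfrac32$ is designed to fix: $\sigma(y)/y$ vanishes like $y^{\alpha-1}$ with $\alpha-1>\tfrac12$, and a two-regime argument analogous to cases (I)--(II) in the proof of Lemma~\ref{Thm:sheatCalc}(2)---one bounds the kernel difference crudely when $x$ is small compared to $d_n^{2/3}$ and exploits differentiability of the $\sigma(y)/y$-weighted kernel otherwise---should upgrade the bound to $C_T\,d_n^\gamma$ with some $\gamma>1$. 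Combined with the corresponding bound for $|\dot\beta^L_{n+1}-\dot\beta^L_n|^{2p}$, obtained by the same computation at $x=0$, this closes the Picard iteration in the product of $\|\cdot\|_{2p}$ and the uniform-in-$t$ norm, and yields existence and uniqueness of the truncated pair $(\tilde u^L,\dot\beta^L)$.

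Finally, the same Burkholder--Jensen--Lemma~\ref{Thm:sheatCalc} chain used in the proof of Theorem~\ref{Thm:sheatMain}(3) delivers moment bounds of the form $\mathbb E[|\dot\beta^L(t)-\dot\beta^L(s)|^{2p}]\le C_{p,T}|t-s|^{p/2}$, so Kolmogorov's continuity theorem gives the $(\tfrac14-\epsilon)$-H\"{o}lder regularity of $\dot\beta^L$ and hence $\beta^L\in C^1$. The Stefan condition $\rho\dot\beta^L(t)=\lim_{x\searrow\beta^L(t)} u^L(t,x)/(x-\beta^L(t))$ is then verified $\mathbb P$-a.s. by the same $x\searrow 0$ limit argument used in Theorem~\ref{Thm:sheatMain}(2). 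To remove the truncation, set $\tau^L := \inf\{t:|\dot\beta^L(t)|\ge L\}$ and $\tau := \lim_{L\to\infty}\tau^L$; consistency of the family $(\tilde u^L,\beta^L)$ on $[0,\tau^{L_1}\wedge\tau^{L_2})$ follows verbatim from Section~\ref{Ch:sspcolored} and patches the truncated solutions into the global $(u,\beta)$ on $[0,\tau)$.
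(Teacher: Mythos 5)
Your overall architecture (truncate, Picard-iterate in $\|\cdot\|_{2p}$, use the $\alpha>\tfrac32$ scaling to tame the boundary shift, get H\"{o}lder regularity of $\dot\beta$ from Kolmogorov, then relax the truncation) matches the paper's, but you miss the structural observation that makes the paper's proof close, and this leaves a real gap in your central step. Because the noise coefficient is $\sigma(x-\beta(t))$ rather than a function of $u$, and because the drift kernel satisfies $\tilde q(t,0,y):=\lim_{x\searrow 0}\frac yx\frac{\partial p}{\partial x}(t,x,y)=0$, the equation for $\rho\dot\beta(t)$ \emph{decouples}: it is the closed equation~\eqref{Eq:SSPWBeta} involving only $u_0$, $\sigma$, and $W_\beta$, with no $\tilde u$ and no drift term. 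The paper therefore solves for $\beta$ first (untruncated, by iterating $\beta_n\mapsto\beta_{n+1}$ with the shift estimate performed only at $x=0$ via the mean value theorem applied to $\mathbf g(s,y)=\tilde p(s,0,y)y^{\alpha-1}$, yielding $H_n(t)\le K\int_0^t H_{n-1}(s)(t-s)^{\frac32-\alpha}ds$), and only then solves for $\tilde u^L$ with $\beta$ \emph{fixed} --- so the stochastic integral never changes between iterations of $\tilde u^L_n$ and only the $\Psi_L$-truncated drift term iterates. Your coupled iteration on the pair $(\tilde u^L_n,\dot\beta^L_n)$ instead requires the boundary-shift variance estimate uniformly in $x\in(0,1]$, not just at $x=0$, and requires difference (not merely boundedness) estimates on the drift contribution; the remark that the drift ``is bounded because of the cutoff $\Psi_L$'' does not give a contraction. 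Your claimed upgrade of the shift bound to $C_T d_n^\gamma$ with $\gamma>1$ by a two-regime argument is also not substantiated; the mechanism that actually works is quadratic in $d_n$ with an integrable time singularity $(t-s)^{\alpha-\frac52}$, and the exponent arithmetic is exactly where $\alpha>\tfrac32$ enters.

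A second, concrete omission: your proposal relies only on the $\tilde p$ estimates of Lemma~\ref{Thm:sheatCalc}, but the truncated equation for $\tilde u^L$ has a genuine drift term $\int_0^t\int_0^\infty\dot\beta(s)\frac{\partial p_-}{\partial x}(t-s,x,y)\tilde u^L(s,y)\Psi_L(|\dot\beta(s)|)\,dy\,ds$, and both closing the Picard iteration for $\tilde u^L$ (via $\int_0^\infty|\tilde q(s,x,y)|\,dy\le C/\sqrt s$) and proving the space-time H\"{o}lder continuity of $v=\tilde u^L/x$ --- which is what delivers the Stefan condition $\lim_{x\searrow 0}v(t,x)=\rho\dot\beta(t)$ and the $(\tfrac14-\epsilon)$-regularity of $\dot\beta$ --- require the separate $L^1$ integral regularities of the kernel $\tilde q(t,x,y)=\frac yx\frac{\partial p}{\partial x}(t,x,y)$ established in Lemma~\ref{Thm:SSPWCalc}. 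Without some substitute for those estimates, the Kolmogorov argument in your last paragraph controls only the stochastic term and not the drift term, so the a.s.\ identification of $\lim_{x\searrow 0}\tilde u^L(t,x)/x$ with $\rho\dot\beta(t)$ is not complete.
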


Theorem~\ref{Thm:SSPWMain} is proved in Section~\ref{Sec:ProofSSPWMain} using the integral regularities of the new kernels $\tilde p(t,x,y)$ and $\tilde q(t,x,y)$ proved in Section~\ref{Sec:ProofsheatCalc} and Section~\ref{Sec:ProofSSPWCalc}. The regularity result on the moving boundary is proved via using Kolmorogov's Continuity Theorem similarly to the proof given in Section~\ref{Sec:ProofsheatMain}.

\subsection{Integral Regularities of Kernels $\tilde p(t,x,y),\tilde q(t,x,y)$}\label{Sec:ProofSSPWCalc}
As in the previous section, the proof to the main theorem is based on the integral regularities of newly defined kernels. Define two new kernels (where $\tilde p$ is the same as in the previous section)
\[\begin{split}
\tilde p(t,x,y)&:=\frac yx p(t,x,y), \tilde p(t,0,y):=\lim_{x\searrow 0}\tilde p(t,x,y)=y\frac{\partial p}{\partial x}(t,0,y)=\frac{y^2}{t\sqrt{t}}\exp\left[-\frac{y^2}{2t}\right];\\
\tilde q(t,x,y)&:=\frac yx \frac{\partial p}{\partial x}(t,x,y),\tilde q(t,0,y):=\lim_{x\searrow 0}\tilde q(t,x,y)=y\frac{\partial^2 p}{\partial x^2}(t,0,y)=0.
\end{split}\]

Then we have the following integral regularity results about $\tilde q(t,x,y)$, similar to those for $\tilde p(t,x,y)$ as in Lemma~\ref{Thm:sheatCalc}:
\begin{lemma}\label{Thm:SSPWCalc} There exists $C>0, K>0$ such that
\begin{enumerate}
\item[(1)]
\[ \int_0^\infty |\tilde q(s,x,y)|dy <\frac{C}{\sqrt{s}};\]
\item[(2)] for $x,x+h\in [0,1]$,
\[ \int_0^t\int_0^\infty |\tilde q(t-s,x+h,y)-\tilde q(t-s,x,y)|dyds <Kh^\frac16;\]
\item[(3)] for $k\ge 0$,
\[ \int_0^t\int_0^\infty |\tilde q(t+k-s,x,y)-\tilde q(t-s,x,y)|dyds <Kk^\frac14.\]
\end{enumerate}
\end{lemma}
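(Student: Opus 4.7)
The plan is to mirror the structure of the proof of Lemma~\ref{Thm:sheatCalc}. I would write $p(s,x,y)$ explicitly as a difference of Gaussians, compute $\partial p/\partial x$, and rearrange $\tilde q(s,x,y)$ as a polynomial-weighted combination of Gaussians in which the $1/x$ prefactor is cancelled by the first-order vanishing of $\partial p/\partial x$ at $x=0$ (consistent with the stated boundary identity $\tilde q(s,0,y)=0$). Every estimate then reduces to a Gaussian-moment computation via Fact (a), an odd/even symmetrisation via Fact (b), and the truncated-exponential bound Fact (c) already collected in the proof of Lemma~\ref{Thm:sheatCalc}.

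For part (1), the triangle inequality on $|\tilde q|$ followed by the symmetrisation of Fact (b) neutralises the $1/x$ factor in exactly the same way as in step (1) of Lemma~\ref{Thm:sheatCalc}, and Fact (a) then evaluates the remaining moments to give $C/\sqrt{s}$. For part (2), I would deploy the same two-method strategy used in Lemma~\ref{Thm:sheatCalc}(2): Method (I) bounds the difference by $|\tilde q(s,x+h,y)|+|\tilde q(s,x,y)|$ and pulls an $h$-factor out of the Gaussian shift using Fact (c), which is sharp when $x$ is not too small; Method (II) uses a Taylor expansion $|\tilde q(s,x+h,y)-\tilde q(s,x,y)|\le h\sup_{\xi\in[x,x+h]}|\partial_\xi \tilde q(s,\xi,y)|$, which is sharp when $x$ is small. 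Taking the minimum of the two bounds on the regions $\{x\le h^\alpha\}$ and $\{x>h^\alpha\}$ and integrating in $s$ then yields the $h^{1/6}$ bound. For part (3), I would split the $s$-integral at $s=t-k^\gamma$: the short piece of length $k^\gamma$ is controlled directly by part (1) and $\int_0^{k^\gamma} r^{-1/2}dr = 2k^{\gamma/2}$, while on the long piece I would Taylor-expand in time using $\partial_t\tilde q$ (itself controlled by the same Gaussian-moment arguments), and optimise $\gamma$ to obtain the $k^{1/4}$ bound.

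The main obstacle will be part (2). In Lemma~\ref{Thm:sheatCalc}(2) the square $[\tilde p(s,x+h,\cdot)-\tilde p(s,x,\cdot)]^2$ expanded into a closed form whose leading cancellations were visible; here the $L^1$ nature of the estimate denies us that expansion, so the sign pattern of $\tilde q(s,x+h,y)-\tilde q(s,x,y)$, and the precise cancellation order of the $1/x$-prefactors after shifting $x\mapsto x+h$, must be unpacked pointwise before the $y$-integration. Obtaining the exponent $h^{1/6}$, rather than a weaker power, requires that the Method (I)/Method (II) crossover $\alpha$ be chosen correctly; by analogy with the $x\asymp h^{2/3}$ threshold of Lemma~\ref{Thm:sheatCalc}(2), I expect $\alpha=2/3$ to remain optimal here.
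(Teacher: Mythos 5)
There is a genuine gap, and it begins with your description of the cancellation at $x=0$. For the Dirichlet kernel $p(s,x,y)=\frac{C_1}{\sqrt s}\bigl[e^{-(x-y)^2/2s}-e^{-(x+y)^2/2s}\bigr]$ the normal derivative does \emph{not} vanish at the boundary: $\frac{\partial p}{\partial x}(s,0,y)=\frac{2C_1y}{s^{3/2}}e^{-y^2/2s}\neq0$ (only $\frac{\partial^2 p}{\partial x^2}(s,0,y)=0$), so the $1/x$ prefactor in $\tilde q=\frac yx\frac{\partial p}{\partial x}$ is not neutralised pointwise. The identity the paper actually builds on is the algebraic decomposition
\[ \tilde q(s,x,y)=\frac{y^2}{xs}\,p_+(s,x,y)-\frac xs\,\tilde p(s,x,y),\qquad p_+(s,x,y):=\frac{C_1}{\sqrt s}\left[e^{-\frac{(x-y)^2}{2s}}+e^{-\frac{(x+y)^2}{2s}}\right],\]
which splits $\tilde q$ into a piece proportional to the already-controlled $\tilde p$ of Lemma~\ref{Thm:sheatCalc} and a new $p_+$ piece whose $1/x$ factor must be absorbed by the $y$-integration via facts (a)--(b), not by a Taylor cancellation in $x$. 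Without this decomposition your Method (II) for part (2) --- bounding $|\tilde q(s,x+h,y)-\tilde q(s,x,y)|$ by $h\sup_\xi|\partial_\xi\tilde q(s,\xi,y)|$ --- runs into an uncancelled singularity: near $x=0$ one has $\partial_x\tilde q\sim-\frac y{x^2}\frac{\partial p}{\partial x}(s,0,y)$, and the ensuing $x$- and $s$-integrations do not close.

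Second, you have not located the mechanism that produces the exponents $\frac16$ and $\frac14$. In the paper they arise by first proving \emph{squared} increment bounds on the $p_+$ piece --- of order $h^{1/3}$ in space and $k^{1/2}$ in time, by the same two-method computation as Lemma~\ref{Thm:sheatCalc}(2)--(3) --- and then passing to the $L^1$ bounds of the statement, which halves the exponents ($h^{1/3}\to h^{1/6}$, $k^{1/2}\to k^{1/4}$); the $\tilde p$ piece is estimated in $L^1$ directly and contributes the better orders $h$ and $k^{1/2}$. Your plan aims to reach $h^{1/6}$ and $k^{1/4}$ directly from $L^1$ two-method and time-splitting arguments; notice that your own heuristic for part (3) (short piece of length $k^\gamma$ via part (1), long piece via $\partial_t\tilde q$) optimises to $k^{1/2}$, not $k^{1/4}$, which signals that the route you describe is not the one generating these exponents. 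Likewise the crossover $x\asymp h^{2/3}$ is calibrated to the squared $\tilde p$-increments and there is no reason it is the correct threshold for a direct $L^1$ estimate of $\tilde q$-increments. Part (1) of your plan (symmetrisation plus Gaussian moments) does match the paper's argument.
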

\begin{proof} First we observe that
\[ \tilde q(t,x,y)=\frac{y^2}{xs}p_+(t,x,y)-\frac{y}{s}p(t,x,y)=\frac{y^2}{xs}p_+(t,x,y)-\frac{x}{s}\tilde p(t,x,y)\]
where
\[ p_+(t,x,y):=\frac{C_1}{\sqrt{s}}\left[e^{-\frac{(x-y)^2}{2s}}+e^{-\frac{(x+y)^2}{2s}}\right].\]
Therefore the calculations to prove the above facts are fairly similar to those in Lemma~\ref{Thm:sheatCalc}, hence we only provide sketch here.
\begin{enumerate}
\item[(1)] this can be shown by using facts (a) and (b) in Lemma~\ref{Thm:sheatCalc}, where
\[\begin{split}
g(y)&:=|y|,\\
f(y)&:=|y|\exp\left(-\frac{y^2}{2s}\right),
\end{split}\]
followed by a direct calculation of the integral after applying (b) with $f,g$.
\item[(2)] Similar to Lemma~\ref{Thm:sheatCalc} (2), we still use a two-method approach to estimate the first part containing $p_+$:
\[ \int_0^t\int_0^\infty \frac{y^4}{s^2}\left[\frac{ p_+(t-s,x+h,y)}{x+h}-\frac{p_+(t-s,x,y)}x\right]^2dyds\le C h^{\frac13}.\]
From Lemma~\ref{Thm:sheatCalc} (1) and (c) we also have
\[ \int_0^t\frac{ds}s\int_0^\infty \left|(x+h)\tilde p(t-s,x+h,y)-x\tilde p(t-s,x,y)\right|dy\le C' h.\]
Combining the above two facts, we have there exists $K>0$ such that
\[ \int_0^t\int_0^\infty |\tilde q(t-s,x+h,y)-\tilde q(t-s,x,y)|dyds <Kh^\frac16.\]
\item[(3)] Similar to Lemma~\ref{Thm:sheatCalc} (3), we first estimate the first part containing $p_+$:
\[ \int_0^t\int_0^\infty \frac{y^4}{x^2s^2}\left[ p_+(t+k-s,x,y)-p_+(t-s,x,y)\right]^2dyds\le C k^{\frac12}.\]
From Lemma~\ref{Thm:sheatCalc} (1) and (c) we also have
\[ \int_0^t\frac{xds}s\int_0^\infty \left|\tilde p(t+k-s,x+h,y)-\tilde p(t-s,x,y)\right|dy\le C' k^\frac12.\]
Combining the above two facts, we have there exists $K>0$ such that
\[ \int_0^t\int_0^\infty |\tilde q(t+k-s,x,y)-\tilde q(t-s,x,y)|dyds <Kk^\frac14. \Box\]
\end{enumerate}
\end{proof}

\subsection{Proof of the Main Theorem}\label{Sec:ProofSSPWMain}
As in the previous sections, we work with the weak formulation of the solution and its equivalent evolution equation. Define $\tilde u(t,x):=u(t,x+\beta(t))$, Then the equivalent evolution equation gives
\begin{equation}\label{Eq:SSPWU}
\begin{split}
\tilde u(t,x)&=\int_0^\infty p(t,x,y)u_0(y)dy\\
&\quad\quad+\int_0^t\int_0^\infty \dot\beta(s)q(t-s,x,y)\tilde u(s,y)dyds\\
&\quad\quad+\int_0^t\int_0^\infty p(t-s,x,y)\sigma(y) W_\beta(dyds),
\end{split}\end{equation}
where $W_\beta$ is the Brownian sheet obtained by shifting $W$ spatially by $x\to x+\beta(t)$.
The main theorem is proved by adopting the following strategy:
\begin{enumerate}
\item[(1)] prove the existence and uniqueness of the truncated solution $\tilde u^L(t,x)$ and its corresponding $\beta(t)$ in $\|\cdot\|_{2p}$ for $0\le t<\tau^L$ for a fixed $L>0$ by an argument based on Picard-type iterations;
\item[(2)] based on (1), show that $\mathbb P$-a.s., the Stefan boundary condition
\[ \lim_{x\searrow 0}\frac{\tilde u^L(t,x)}x = \rho \dot\beta(t)\]
holds by using Kolmogorov's Continuity Theorem on the drift term and the Brownian term;
\item[(3)] let $L\to\infty$ and obtain the solution $\tilde u(t,x)$; transform $\tilde u$ back to $u$ and obtain a weak solution of~\eqref{Eq:SSPW}.
\end{enumerate}

\subsubsection{Existence and Uniqueness of the Truncated Solution}

In this section we first show the existence and uniqueness of $\beta$ that satisfies~\eqref{Eq:SSPWBeta}, and then truncate $\dot\beta(t)$ by a fixed $L>0$ so that the drift term of the solution is controlled from growing too large. Then we show the existence and uniqueness of the solution under the truncation.

Defining $W_\beta$ as before, then we have
\begin{equation}\label{Eq:SSPWBeta}
\begin{split}
\rho\dot\beta(t)&=\int_0^\infty \frac{\partial p}{\partial x}(t,0,y)u_0(y)dy \\
&\quad\quad +\int_0^t\int_0^\infty \frac{\partial p}{\partial x}(t-s,0,y)\sigma(y) W_\beta(dyds),
\end{split}
\end{equation}

\begin{lemma} There exists a unique $\beta(t)$ that satisfies~\eqref{Eq:SSPWBeta}.
\end{lemma}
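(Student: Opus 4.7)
The plan is a Picard iteration on $\dot\beta$, analogous to the one in Section~\ref{Sec:ProofsheatMain}, but complicated by the fact that the driving noise $W_\beta$ itself depends on the unknown $\beta$. Set $\dot\beta_0\equiv 0$ and iterate
\[
\rho\dot\beta_{n+1}(t)=\int_0^\infty\frac{\partial p}{\partial x}(t,0,y)u_0(y)\,dy+\int_0^t\int_0^\infty\frac{\partial p}{\partial x}(t-s,0,y)\sigma(y)\,W_{\beta_n}(dyds).
\]
To keep the boundary shift bounded, first run the iteration under the a priori truncation $|\dot\beta_n|\le L$; this guarantees $|d_n(s)|:=|\beta_n(s)-\beta_{n-1}(s)|\le 2Ls$ uniformly on $[0,T]$. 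The truncation will be removed afterwards via the stopping time $\tau^L$, exactly as in Section~\ref{Ch:sspcolored}.

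The main step is a contraction estimate on $\|\dot\beta_{n+1}-\dot\beta_n\|_{2p}$. Using the formal identity $\int f(y)\,W_\beta(dyds)=\int f(y-\beta(s))\,W(dyds)$, Burkholder's inequality, and Jensen's inequality (exactly as in Section~\ref{Sec:ProofsheatMain}), the $2p$-th moment of $\dot\beta_{n+1}(t)-\dot\beta_n(t)$ is controlled by a power of the shift integral
\[
\Delta_n(t):=\int_0^t\int_{\mathbb{R}}\left[\frac{\partial p}{\partial x}(t-s,0,z-d_n(s))\sigma(z-d_n(s))\chi_{\{z>d_n(s)\}}-\frac{\partial p}{\partial x}(t-s,0,z)\sigma(z)\chi_{\{z>0\}}\right]^2 dzds.
\]
Rewriting the integrand as $\tilde p(t-s,0,z)\,\sigma(z)/z$ and invoking the regular scaling hypothesis $|\sigma(z)/z|\le\min\{Az^{\alpha-1},B\}$ with $\alpha-1>\tfrac12$, the product carries strictly positive spatial H\"older regularity at the boundary. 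A two-regime split ($z$ small relative to $d_n$ versus $z$ large) in the spirit of Lemma~\ref{Thm:sheatCalc}(2) should then yield a shift estimate $\Delta_n(t)\le C_{T,L}\|d_n\|_\infty^{\gamma}$ for some $\gamma>0$. This is the main obstacle of the proof: the combined integrand must retain strictly positive H\"older regularity in $z$ after the $1/z$ factor is absorbed, which is precisely what the assumption $\alpha>\tfrac32$ is designed to provide.

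Once the shift estimate is in hand, the bound $|d_n(s)|\le s\sup_{r\le s}|\dot\beta_n(r)-\dot\beta_{n-1}(r)|$ converts the above into an inequality of the form
\[
\|\dot\beta_{n+1}(t)-\dot\beta_n(t)\|_{2p}\le C_{L,p,T}\int_0^t\frac{\|\dot\beta_n(r)-\dot\beta_{n-1}(r)\|_{2p}}{(t-r)^\kappa}\,dr
\]
for some $\kappa<1$. By Lemma 3.3 of~\cite{walsh86}, $\sum_n\|\dot\beta_{n+1}-\dot\beta_n\|_{2p}$ is finite and uniform on $[0,T\wedge\tau^L]$, so the iterates are Cauchy and converge to a $\dot\beta$ solving~\eqref{Eq:SSPWBeta}; uniqueness follows by applying the same estimate to two candidate solutions and invoking Gronwall's inequality. $\Box$
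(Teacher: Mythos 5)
Your overall skeleton---Picard iteration on $\dot\beta$, Burkholder plus Jensen to reduce everything to a deterministic shift integral, then Walsh's Lemma 3.3---is the same as the paper's. But the step you flag as ``the main obstacle'' and leave as a conjecture is exactly where your proposed mechanism fails, and the failure is not cosmetic. You claim a H\"older-type two-regime split in the spirit of Lemma~\ref{Thm:sheatCalc}(2) ``should yield'' $\Delta_n(t)\le C_{T,L}\|d_n\|_\infty^{\gamma}$ for \emph{some} $\gamma>0$. An arbitrary positive $\gamma$ is not enough. Since $H_n(t):=\mathbb E[(\dot\beta_{n+1}(t)-\dot\beta_n(t))^{2p}]\le C\,\mathbb E[\Delta_n(t)^p]$ and $\|d_n\|_\infty$ is of order $t\sup_r|\dot\beta_n(r)-\dot\beta_{n-1}(r)|$, an estimate with $\gamma<2$ produces a recursion of the shape $a_n\le C a_{n-1}^{\gamma/2}$ with exponent strictly less than $1$; such a recursion converges to a positive fixed point of $x=Cx^{\gamma/2}$, not to zero, so the iterates are not Cauchy and neither existence nor uniqueness follows. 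You need the variance to be \emph{quadratic} in the shift, $\gamma=2$, with an integrable time singularity, so that the recursion is linear in $H_{n-1}$ and Walsh's Lemma 3.3 applies. A H\"older split cannot deliver $\gamma=2$: the paper itself computes (in the discussion opening Section~\ref{Ch:sspw}) that even for $\sigma(x)=x$ the two-method split only gives exponent $2/3$, and cites this as the reason a multiplicative noise is abandoned.

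The paper's mechanism is different: write the integrand as $\mathbf g(t-s,y+|d_n(s)|)-\mathbf g(t-s,y)$ with $\mathbf g(s,y):=\tilde p(s,0,y)\,y^{\alpha-1}$ (absorbing $\sigma(y)/y\sim Ay^{\alpha-1}$ into the kernel), and apply the fundamental theorem of calculus in $y$, i.e.
\[
\mathbf g(t-s,y+|d_n(s)|)-\mathbf g(t-s,y)=|d_n(s)|\int_0^1\frac{\partial\mathbf g}{\partial y}(t-s,y+\lambda|d_n(s)|)\,d\lambda,
\]
which squares to the exact factor $|d_n(s)|^2$. The hypothesis $\alpha>\frac32$ is then used not to produce ``strictly positive H\"older regularity'' but to make $\int_0^\infty(\partial_y\mathbf g)^2\,dy\sim(t-s)^{\alpha-5/2}$ integrable in $s$ near $s=t$. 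Finally, $|d_n(s)|^2\le s\int_0^s(\dot\beta_n(r)-\dot\beta_{n-1}(r))^2dr$ by Cauchy--Schwarz closes the loop pointwise in the $\|\cdot\|_{2p}$ norm without the supremum over $r$ that your version introduces (a sup inside the $2p$-th moment is not the quantity being iterated). Two smaller remarks: your a priori truncation $|\dot\beta_n|\le L$ is unnecessary here, since \eqref{Eq:SSPWBeta} is autonomous in $\beta$ and the quadratic estimate above closes the iteration globally; and the starting point $\dot\beta_0\equiv0$ versus the paper's choice (the unshifted integral against $W$) is immaterial once one checks $\mathbb E|\dot\beta_1(t)|^{2p}<\infty$ via Lemma~\ref{Thm:sheatCalc}(1).
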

\begin{proof}
Consider the following iteration on $\beta(t)$:
\[\begin{split}
\rho\dot\beta_n(t)&=\int_0^\infty \frac{\partial p_-}{\partial x}(t,0,y)u_0(y)dy \\
&\quad\quad +\int_0^t\int_0^\infty \tilde p(t-s,0,y)\frac{\sigma(y)}yW_{\beta_{n-1}}(dyds).
\end{split}\]

Fix $p\ge 1$ and define
\[ H_n(t):=\mathbb E\left[(\dot\beta_{n+1}(t)-\dot\beta_n(t))^{2p}\right].\]
Also define
\[ \mathbf g(s,y):=\tilde p(s,0,y)y^{\alpha-1}.\]
Then letting $d_n(t):=\beta_n(t)-\beta_{n-1}(t)$, we have
\[\begin{split}
 H_n(t) &\le K_1\mathbb E\left\{\int_0^t\int_0^\infty \left[\mathbf g(t-s,y+|d_n(s)|)-\mathbf g(t-s,y)\right]^2dyds\right\}^p\\
 &\le K_1\mathbb E\left\{\int_0^t|d_n(s)|^2\int_0^\infty \left[\int_0^1 \frac{\partial \mathbf g}{\partial y}(t-s,y+\lambda |d_n(s)|)d\lambda\right]^2dyds\right\}^p\\
 &\le K_2\mathbb E\left\{\int_0^1\int_0^t|d_n(s)|^2\int_0^\infty \frac{(y+\lambda|d_n(s)|)^{2\alpha}}{(t-s)^3}\exp\left[-\frac{(y+\lambda|d_n(s)|)^2}{(t-s)}\right]dydsd\lambda\right\}^p\\
 &\quad\quad+K_3\mathbb E\left\{\int_0^1\int_0^t|d_n(s)|^2\int_0^\infty \frac{(y+\lambda|d_n(s)|)^{2\alpha+4}}{(t-s)^5}\exp\left[-\frac{(y+\lambda|d_n(s)|)^2}{(t-s)}\right]dydsd\lambda\right\}^p\\
 &\le K_4\mathbb E\left\{\int_0^t\frac{|d_n(s)|^2}{(t-s)^{\frac52-\alpha}}ds \right\}^p\le K_5\mathbb E\left\{\int_0^t\int_0^s\frac{s(\beta_n(r)-\beta_{n-1}(r))^2}{(t-s)^{\frac52-\alpha}}drds \right\}^p\\
 &=K_5\mathbb E\left\{\int_0^t\int_r^t\frac{s(\beta_n(r)-\beta_{n-1}(r))^2}{(t-s)^{\frac52-\alpha}}dsdr \right\}^p.
 \end{split}\]
 Note that for the above argument to work we must have $\alpha>\frac32$, otherwise the first integral with respect to $r$ is $\infty$. Since $\frac52-\alpha>-1$, we get that
 \[ H_n(t)\le K\int_0^t H_{n-1}(s)(t-s)^{\frac32-\alpha}ds.\]

Also, define
\[\begin{split}
\rho\dot\beta_0(t)&=\int_0^\infty \frac{\partial p_-}{\partial x}(t,0,y)u_0(y)dy \\
&\quad\quad +\int_0^t\int_0^\infty \tilde p(t-s,0,y)\frac{\sigma(y)}yW(dyds),
\end{split}\]
then from Lemma~\ref{Thm:sheatCalc} (1) we have
\[ \mathbb E|\dot\beta_0(t)|^{2p}<\infty\]

Therefore, by Walsh's Lemma 3.3 in~\cite{walsh86}, we obtain that $\beta(t)$ as the limit of $\beta_n(t)$ exists and is unique, and we also have
\[ \mathbb E|\dot\beta(t)|^{2p}<\infty\]
and its bound is uniform (that is, not dependent of $t$). $\Box$
\end{proof}

Let $\dot\beta(t)$ be the solution to~\eqref{Eq:SSPWBeta}. Fix $L>0$, and define $\Psi_L:[0,\infty)\to [0,1]$ as a smooth monotone decreasing function that satisfies $\chi_{[0,L]}\le\Psi_L\le\chi_{[0,L+1]}$. Now consider the following iteration of $\tilde u^L(t,x)$:
\begin{equation}\label{Eq:SSPWUL}
\begin{split}
\tilde u_n^L(t,x)&=\int_0^\infty p_-(t,x,y)u_0(y)dy\\
&\quad\quad+\int_0^t\int_0^\infty \dot\beta(s)\frac{\partial p_-}{\partial x}(t-s,x,y)\tilde u_{n-1}^L(s,y)\Psi_L(|\dot\beta(s)|)dyds\\
&\quad\quad+\int_0^t\int_0^\infty p_-(t-s,x,y)\sigma(y) W_\beta(dyds),
\end{split}\end{equation}
where the initial value
\[ \tilde u_0^L(t,x):=\int_0^\infty p_-(t,x,y)u_0(y)dy+\int_0^t\int_0^\infty p_-(t-s,x,y)\sigma(y) W_\beta(dyds).\]
Then we have
\begin{lemma} The solution $\tilde u^L(t,x)$ to~\eqref{Eq:SSPWUL} exists and is unique.
\end{lemma}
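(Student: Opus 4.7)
The plan is to mirror the Picard iteration from Section~\ref{Ch:sheat}: rather than iterate on $\tilde u_n^L$ directly, I would iterate on the rescaled sequence $\tilde v_n^L(t,x):=\tilde u_n^L(t,x)/x$ in the norm $\|\cdot\|_{2p}$. The rescaling is essential because the drift kernel $\partial p_-/\partial x(t-s,x,y)$ is not integrable in $y$ uniformly in $x$ near the boundary, whereas its rescaled companion $\tilde q(t-s,x,y)=(y/x)\,\partial p_-/\partial x(t-s,x,y)$ enjoys the $1/\sqrt{t-s}$ integrability supplied by Lemma~\ref{Thm:SSPWCalc}(1). Since $\dot\beta$ was produced once and for all in the preceding lemma, it is treated as a given process, and the truncation $\Psi_L$ supplies the pointwise bound $|\dot\beta(s)\Psi_L(|\dot\beta(s)|)|\le L+1$, which removes the only nonlinearity.

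Subtracting two consecutive iterates in~\eqref{Eq:SSPWUL} kills both the heat term and the Brownian term (neither depends on $n$), leaving
\[
\tilde u_n^L(t,x)-\tilde u_{n-1}^L(t,x)=\int_0^t\!\int_0^\infty \dot\beta(s)\Psi_L(|\dot\beta(s)|)\frac{\partial p_-}{\partial x}(t-s,x,y)\bigl[\tilde u_{n-1}^L-\tilde u_{n-2}^L\bigr](s,y)\,dy\,ds.
\]
Dividing by $x$ and using the identity $\tfrac{1}{x}\tfrac{\partial p_-}{\partial x}(t-s,x,y)\cdot y=\tilde q(t-s,x,y)$ converts this into an iteration for $\tilde v_n^L-\tilde v_{n-1}^L$ driven by the kernel $\tilde q$. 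Setting $H_n(t):=\|\tilde v_n^L(t,\cdot)-\tilde v_{n-1}^L(t,\cdot)\|_{2p}$ and applying the elementary Jensen inequality $|\int fg|^{2p}\le(\int g)^{2p-1}\int f^{2p}g$ with the positive weight $g=|\tilde q|$, followed by Fubini and Lemma~\ref{Thm:SSPWCalc}(1), I would obtain the Gronwall-type recursion
\[
H_n(t)\le K_{L,p,T}\int_0^t\frac{H_{n-1}(s)}{\sqrt{t-s}}\,ds.
\]

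The base case $\sup_{[0,T]}H_1(t)<\infty$ reduces to bounding $\|\tilde v_0^L(t,\cdot)\|_{2p}$ uniformly on $[0,T]$; the deterministic piece is controlled by the assumed boundedness of $u_0(y)/y$, and the Brownian piece by Burkholder together with the $L^2$ estimate of Lemma~\ref{Thm:sheatCalc}(1) applied to $\tilde p$, using $|\sigma(y)/y|\le B$ from the definition of a regular scaling function. Walsh's Lemma 3.3 of~\cite{walsh86} applied to the recursion above then yields $\sum_n H_n(t)<\infty$ uniformly on compact $t$-intervals, so $\tilde v_n^L\to\tilde v^L$ in $\|\cdot\|_{2p}$ and $\tilde u^L(t,x):=x\tilde v^L(t,x)$ is the desired solution; uniqueness follows by running the same estimate on the difference of two candidate solutions. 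The main obstacle I anticipate is precisely the boundary singularity: bounding the drift term in the unrescaled norm $\|\tilde u_n^L\|_{2p}$ fails because $\int|\partial_x p_-(t-s,x,y)|\,dy$ blows up as $x\to 0$, and only by rescaling to $\tilde v_n^L$ and re-expressing the drift through $\tilde q$ do the kernel estimates of Lemma~\ref{Thm:SSPWCalc} close the iteration.
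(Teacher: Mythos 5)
Your proposal matches the paper's own argument essentially step for step: divide by $x$ to re-express the drift iteration through the kernel $\tilde q$, bound the truncated $\dot\beta\Psi_L$ by $L+1$, apply Jensen with Lemma~\ref{Thm:SSPWCalc}(1) to obtain the $\int_0^t H_{n-1}(s)(t-s)^{-1/2}\,ds$ recursion, control the base case via $|\sigma(y)|\le By$ and Lemma~\ref{Thm:sheatCalc}(1), and close with Walsh's Lemma 3.3. Your observation that only Jensen (not Burkholder) is needed for the deterministic drift difference, and that the relevant kernel estimate is part (1) rather than part (2) of Lemma~\ref{Thm:SSPWCalc}, is a minor sharpening of the paper's wording but not a different proof.
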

\begin{proof} By adopting a similar approach to the proof of Theorem~\ref{Thm:sheatMain}, we first compute
\[ \frac{\tilde u_{n+1}^L(t,x)-\tilde u_n^L(t,x)}x=\int_0^t\int_0^\infty \dot\beta(s)\tilde q(t-s,x,y)\left[\frac{\tilde u_n^L(s,y)-\tilde u_{n-1}^L(s,y)}y\right]\Psi_L(|\dot\beta(s)|)dyds.\]

Define
\[ J_n(t):=\sup_{x>0}\mathbb E\left[\left|\frac{\tilde u_{n+1}^L(t,x)-\tilde u_n^L(t,x)}x\right|^{2p}\right].\]
From Lemma~\ref{Thm:SSPWCalc} (2), still use Burkholder's inequality and Jensen's inequality and we get
\[\begin{split}
 J_n(t)&\le K_1(L+1)^{2p}\mathbb E\left\{\int_0^t\int_0^\infty \tilde q(t-s,x,y)\left[\frac{\tilde u_n^L(s,y)-\tilde u_{n-1}^L(s,y)}y\right]dyds\right\}^{2p}\\
 &\le K_2\int_0^t \frac{J_{n-1}(s)}{\sqrt{t-s}}ds.
\end{split}\]

Also, from Lemma~\ref{Thm:sheatCalc} (1) and $|\sigma(y)|\le By$, by following the same calculations as in the previous sections we get
\[ \sup_{x>0}\mathbb E\left[\left|\frac{\tilde u_0^L(t,x)}x\right|^{2p}\right]<\infty\]

Therefore, by Walsh's Lemma 3.3 in~\cite{walsh86}, we obtain that $\tilde u^L(t,x)$ as the limit of $\tilde u_n^L(t,x)$ exists and is unique, and we also have
\[ \sup_{x>0}\mathbb E\left[\left|\frac{\tilde u^L(t,x)}x\right|^{2p}\right]<\infty\]
and its bound is uniform (that is, not dependent of $t$). $\Box$
\end{proof}

\subsubsection{The Stefan Boundary Condition Holds}
\begin{lemma}\label{Thm:SSPWB}
Let $\tilde u^L$ and $\beta$ be the unique solutions to~\eqref{Eq:SSPWUL} and~\eqref{Eq:SSPWBeta}. Then $\mathbb P$-a.s., the Stefan boundary condition holds, namely,
\[ \lim_{x\searrow 0}\frac{\tilde u^L(t,x)}x=\rho\dot\beta(t),\forall 0\le t<\tau^L
\]
and $\dot\beta(t)$ is $\left(\frac14-\epsilon\right)$-H\"{o}lder continuous.
\end{lemma}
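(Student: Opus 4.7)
The plan is to apply Kolmogorov's Continuity Theorem to $(t,x)\mapsto\tilde u^L(t,x)/x$ on $[0,T]\times(0,1]$ in order to obtain an a.s.\ jointly continuous extension to $x=0$, and then identify the boundary limit with $\rho\dot\beta(t)$ term-by-term.

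First I would rewrite the right-hand side of~\eqref{Eq:SSPWUL} divided by $x$ using the kernels $\tilde p$ and $\tilde q$ from Section~\ref{Sec:ProofSSPWCalc}, via the algebraic identities $\partial_x p_-(t-s,x,y)/x=\tilde q(t-s,x,y)/y$ and $p_-(t-s,x,y)/x=\tilde p(t-s,x,y)/y$. The resulting expression has a deterministic term (which extends continuously to $x=0$ with value $\int_0^\infty\partial_x p(t,0,y)u_0(y)\,dy$ because $p_-(t,0,y)=0$), a drift term whose integrand carries $\dot\beta(s)\Psi_L(|\dot\beta(s)|)\tilde q(t-s,x,y)\tilde u^L(s,y)/y$, and a stochastic term whose integrand carries $\tilde p(t-s,x,y)\sigma(y)/y$. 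The two structural boundary values $\tilde q(t,0,y)=0$ and $\tilde p(t,0,y)=y\,\partial_x p(t,0,y)$ from Section~\ref{Sec:ProofSSPWCalc} will force the drift term to vanish at $x=0$ and turn the stochastic term into precisely the stochastic integral appearing in~\eqref{Eq:SSPWBeta}.

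Next I would upgrade pointwise-in-$t$ convergence to an a.s.\ statement uniform on $[0,\tau^L)$ by proving joint $(t,x)$-Hölder continuity of the drift and stochastic terms on $[0,T]\times[0,1]$ via Kolmogorov, mirroring the argument in Theorem~\ref{Thm:sheatMain}(3). For the stochastic term, Burkholder's inequality, Jensen's inequality (to promote $L^2$ control to $L^{2p}$), the global bound $|\sigma(y)/y|\le B$ supplied by the regular scaling function hypothesis, and Lemma~\ref{Thm:sheatCalc}(2)--(3) yield moment estimates of order $|x-x'|^{p/3}$ in space and $|t-t'|^{p/2}$ in time, giving a.s.\ $(1/6-\epsilon,1/4-\epsilon)$-joint Hölder continuity. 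For the drift term, using the pathwise bound $\Psi_L|\dot\beta|\le L+1$, the uniform estimate $\sup_{s\le T,\,y>0}\mathbb E[|\tilde u^L(s,y)/y|^{2p}]<\infty$ from the preceding existence lemma, and Lemma~\ref{Thm:SSPWCalc}(2)--(3), I expect analogous moment estimates giving joint continuity up to $x=0$, with the $x=0$ trace identically $0$ because $\tilde q(t,0,y)=0$. Summing the three limits yields, a.s.\ and for every $0\le t<\tau^L$,
\[
\lim_{x\searrow 0}\frac{\tilde u^L(t,x)}{x}=\int_0^\infty\partial_x p(t,0,y)u_0(y)\,dy+\int_0^t\!\!\int_0^\infty\partial_x p(t-s,0,y)\sigma(y)\,W_\beta(dy\,ds)=\rho\dot\beta(t),
\]
the last equality being~\eqref{Eq:SSPWBeta}.

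The Hölder regularity of $\dot\beta$ follows by the same Kolmogorov-style argument applied directly to~\eqref{Eq:SSPWBeta}: Burkholder and Jensen combined with Lemma~\ref{Thm:sheatCalc}(3) evaluated at $x=0$ give $\mathbb E|\rho\dot\beta(t)-\rho\dot\beta(s)|^{2p}\le C_{p,T}|t-s|^{p/2}$, which upon taking $p$ large in Kolmogorov produces $(1/4-\epsilon)$-Hölder continuity for every $\epsilon>0$. The main obstacle I anticipate is the drift term, which mixes two random factors, $\dot\beta(s)$ and $\tilde u^L(s,y)/y$; the truncation $\Psi_L$ controls the first pathwise and the uniform $L^{2p}$ bound on $\tilde u^L/\cdot$ from the previous lemma pulls the second outside the kernel estimates, reducing the moment calculation to the deterministic $\tilde q$-regularities supplied by Lemma~\ref{Thm:SSPWCalc}.
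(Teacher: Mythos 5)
Your proposal is correct and follows essentially the same route as the paper: extend $v(t,x):=\tilde u^L(t,x)/x$ to $x=0$ via Kolmogorov's Continuity Theorem, using Burkholder and Jensen together with Lemma~\ref{Thm:sheatCalc}(2)--(3) for the $\tilde p$ part and Lemma~\ref{Thm:SSPWCalc}(2)--(3) for the $\tilde q$ part, with the truncation $\Psi_L$ and the uniform $L^{2p}$ bound on $\tilde u^L/y$ controlling the random factors in the drift. Your explicit term-by-term identification of the boundary trace (the drift vanishing because $\tilde q(t,0,y)=0$ and the stochastic term reducing to the integrand of~\eqref{Eq:SSPWBeta} because $\tilde p(t,0,y)=y\,\partial_x p(t,0,y)$) is exactly what justifies the paper's definition $v(t,0):=\rho\dot\beta(t)$, and is if anything spelled out more carefully than in the paper.
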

\begin{proof}
This lemma is shown by using Kolmogorov Continuity Theorem and adopting the similar calculations as in the previous section. In fact, define for $x>0$ $v(t,x):=\tilde u^L(t,x)/x$ and $v(t,0):=\rho\dot\beta(t)$, then we claim that $\mathbb P$-a.s., $v(t,x)$ is $\left(\frac14-\epsilon,\frac16-\epsilon\right)$-H\"{o}lder continuous for fixed $T>0$ on $[0,T]\times[0,1]$ with $\epsilon>0$.

Indeed, by Burkholder's inequality,
\[\begin{split}
\mathbb E\left[|v(t,x+h)-v(t,x)|^{2p}\right]&\le K_1\mathbb E\left[\int_0^t\int_0^\infty [\tilde q(t-s,x+h,y)-\tilde q(t-s,x,y)]v(s,y)dyds\right]^{2p}\\
&\quad\quad+K_2\mathbb E\left[\int_0^t\int_0^\infty [\tilde p(t-s,x+h,y)-\tilde p(t-s,x,y)]^2v^2(s,y)dyds\right]^p.
\end{split}\]

By Lemma~\ref{Thm:sheatCalc} (2) and Lemma~\ref{Thm:SSPWCalc} (2) about $\tilde p$ and $\tilde q$, we have that by using Jensen's inequality as in the previous section, for $0\le x\le 1$,
\[ \mathbb E\left[|v(t,x+h)-v(t,x)|^{2p}\right]\le Kh^\frac{p}3\]
where $K$ does not depend on $t$. Also, by Lemma~\ref{Thm:sheatCalc} (3) and Lemma~\ref{Thm:SSPWCalc} (3), we have that still by using Jensen's inequality, for $0\le t\le T$,
\[ \mathbb E\left[|v(t+k,x)-v(t,x)|^{2p}\right]\le Kk^\frac{p}2.\]

By Kolmogorov's Continuity Theorem, we have that for all $\delta<1/4,\gamma<1/6$, $v(t,x)$ is $\mathbb P$-a.s. uniformly $(\delta,\gamma)$-H\"{o}lder continuous, which implies that $\mathbb P$-a.s.,
\[ \lim_{x\searrow 0} v(t,x)=\rho\dot\beta(t), \forall t\in[0,T],\]
and $\dot\beta(t)=v(t,0)/\rho$ is $\left(\frac14-\epsilon\right)$-H\"{o}lder continuous.

Now, when $\tau^L<\infty$, we simply let $T:=\tau^L$. When $\tau^L=\infty$, we choose $T:=n$ for all $n\in\mathbb N$, and since the above statement holds for $t\in[0,n]$ for all $n\in\mathbb N$ we have that it holds for $t\in[0,\infty)$. $\Box$
\end{proof}

\subsubsection{Relaxation of the Truncation}
\begin{lemma} Define $\tau:=\lim_{L\to\infty}\tau^L$ and $\tilde u(t,x):=\lim_{L\to\infty}\tilde u^L(t,x)$. Then $\tilde u(t,x)$ is the unique solution to~\eqref{Eq:SSPW}.
\end{lemma}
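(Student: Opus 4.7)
The plan is to relax the truncation by gluing together the family $\{\tilde u^L\}_{L>0}$ using the consistency built into the cutoff $\Psi_L$, then verify that the resulting $\tilde u$ satisfies the untruncated evolution equation~\eqref{Eq:SSPWU} on $[0,\tau)$ and transform back to obtain a weak solution of~\eqref{Eq:SSPW}.

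First I would observe that $\tau^L$ is non-decreasing in $L$, so $\tau:=\lim_{L\to\infty}\tau^L$ is a well-defined (possibly infinite) stopping time. The crucial consistency step is the following: for $L'>L$ and $0\le t<\tau^L$, one has $|\dot\beta(s)|<L$ for $s\in[0,t]$, hence $\Psi_L(|\dot\beta(s)|)=\Psi_{L'}(|\dot\beta(s)|)=1$ throughout. Consequently, both $\tilde u^L$ and $\tilde u^{L'}$ solve the same (untruncated) evolution equation on $[0,\tau^L)$, and the uniqueness part of the previous subsection (applied pathwise on this random time interval, or equivalently by localizing via the stopping time $\tau^L$) forces $\tilde u^{L'}(t,x)=\tilde u^L(t,x)$ on $\{t<\tau^L\}$. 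Thus the pointwise limit $\tilde u(t,x):=\lim_{L\to\infty}\tilde u^L(t,x)$ exists on $\{t<\tau\}$ and coincides with $\tilde u^L(t,x)$ whenever $t<\tau^L$.

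Next I would verify that $\tilde u$ satisfies the untruncated equation~\eqref{Eq:SSPWU} on $[0,\tau)$. Fix $\omega$ and $t<\tau(\omega)$; then there exists $L$ with $t<\tau^L(\omega)$, and on this event $\Psi_L(|\dot\beta(s)|)=1$ for $s\in[0,t]$, so~\eqref{Eq:SSPWUL} for $\tilde u^L=\tilde u$ reduces exactly to~\eqref{Eq:SSPWU}. Since Lemma~\ref{Thm:SSPWB} gives the Stefan boundary condition $\lim_{x\searrow 0}\tilde u^L(t,x)/x=\rho\dot\beta(t)$ for $t<\tau^L$, the same identity holds for $\tilde u$ on $[0,\tau)$, together with the H\"older regularity of $\dot\beta$. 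Undoing the shift via $u(t,x):=\tilde u(t,x-\beta(t))$ then produces a weak solution to~\eqref{Eq:SSPW} on $[0,\tau)$ in exactly the sense laid out in Section~\ref{Ch:sspcolored}.

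For uniqueness, suppose $(u',\beta')$ is another solution of~\eqref{Eq:SSPW} on $[0,\tau')$ and define $\tau'^L:=\inf\{t:|\dot\beta'(t)|\ge L\}$. On $[0,\tau'^L)$ the pair $(\tilde u',\beta')$ solves~\eqref{Eq:SSPWBeta} and~\eqref{Eq:SSPWUL} (since the cutoff is inactive there), so by the uniqueness lemmas for $\beta$ and for $\tilde u^L$ in the truncated setting we obtain $\beta'=\beta$ and $\tilde u'=\tilde u^L=\tilde u$ on $[0,\tau^L\wedge\tau'^L)$; letting $L\to\infty$ gives $\tau'=\tau$ and the two solutions agree.

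The main subtlety I expect is the first step: the uniqueness lemmas earlier were stated globally in $t$, and to transfer them to the random interval $[0,\tau^L)$ one must localize carefully. Concretely, one argues that the difference $\tilde u^{L'}-\tilde u^L$ stopped at $\tau^L$ satisfies a linear integral inequality of the same form as in the Picard estimate (with the cutoff absent because it equals $1$), and then Lemma 3.3 of~\cite{walsh86} forces it to vanish; the martingale property of the It\^o integral under the stopping time $\tau^L$ is what makes this localization legitimate. Once that consistency is in hand, the rest of the argument is routine patching.
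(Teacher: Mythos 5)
Your proposal is correct and follows essentially the same route as the paper: you observe that below the stopping time the cutoff $\Psi_L$ is identically $1$, so $\tilde u$ satisfies the truncated equation for a suitable level (the paper uses $L_t:=\sup\{|\dot\beta(r)|:0\le r\le t\}<\infty$ where you use $\tau^L$), and then uniqueness of the truncated solution forces agreement; your additional remarks on localizing the uniqueness argument and on uniqueness of the untruncated solution are more explicit than the paper's very brief treatment but do not change the method.
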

\begin{proof}
 Fix $t\in[0,\tau)$ and define $L_t:=\sup\{|\dot\beta(r)|:0\le r\le t\}$. Then we have $L_t<\infty$. Therefore for $0\le r\le t$ we have that $\tilde u(r,x)$ also satisfies the equation of $\tilde u^{L_t}(r,x)$. By the uniqueness of $\beta$ and $\tilde u^{L_t}$, this implies that for $0\le r\le t$,
\[ \tilde u(r,x)=\tilde u^{L_t}(r,x)\textrm{ and }\lim_{x\searrow 0} \frac{\tilde u(r,x)}x = \rho\dot\beta(r).\]

Therefore, for $0\le t< \tau$, $\tilde u(t,x)$ satisfies~\eqref{Eq:SSPW} and also
\[ \lim_{x\searrow 0} \frac{\tilde u(t,x)}x = \rho\dot\beta(t). \Box\]
\end{proof}

\subsection{Numerical Simulation}
Since the existence and uniqueness of the problem is shown we can indeed simulate the solution and its boundary numerically. The simulation uses finite-difference Euler approximation scheme described in~\cite{numschemes}. To guarantee numerical robustness in our simulation we assume the space and time increment steps satisfy $\Delta t<(\Delta x)^2/2$.

In this simulation we apply the following simulation parameters:
\[\begin{split}
\rho&=-0.2;\\
u_0(x)&=\frac{x+x^2}{1+x^4/16};\\
\sigma(x)&=\frac{x^2}{1+4x}.
\end{split}\]

The consequent numerical results are illustrated as follows.
\begin{enumerate}
\item[(1)] Figure~\ref{Fig:SSPWU} illustrates the weak solution $u(t,x)$;
\item[(2)] Figure~\ref{Fig:SSPWB} illustrates the boundary derivative $\dot\beta(t)$;
\item[(3)] Figure~\ref{Fig:SSPWUU} illustrates the typical shape of the solution $u(t,x)$ at a particular time $t>0$, from which we see that $u$ is smoother as $x$ gets closer to the boundary (shifted and denoted by $0$), and is differentiable at the boundary.
\end{enumerate}

\begin{figure}
\centering
\includegraphics[width=110mm]{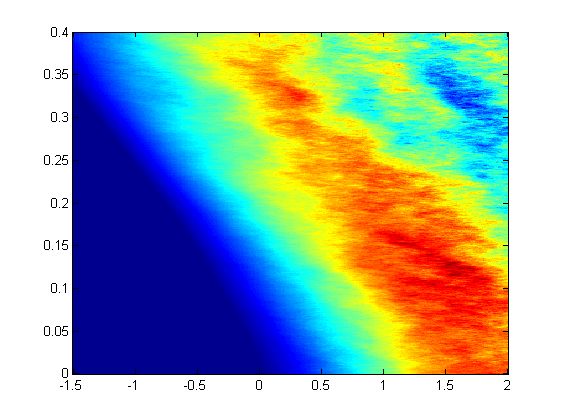}
\caption{Weak solution $u(t,x)$}
\label{Fig:SSPWU}
\end{figure}

\begin{figure}
\centering
\includegraphics[width=110mm]{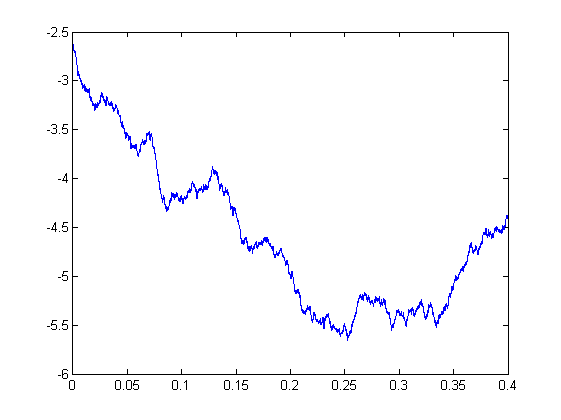}
\caption{Boundary derivative (or speed of moving boundary) $\dot\beta(t)$}
\label{Fig:SSPWB}
\end{figure}

\begin{figure}
\centering
\includegraphics[width=110mm]{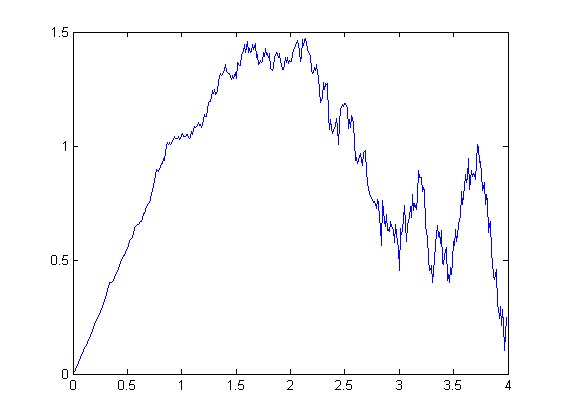}
\caption{A typical shape of the solution $u$ at some time $t>0$. $u$ is smoother as $x$ gets closer to the boundary (shifted and denoted by $0$), and is differentiable at the boundary.}
\label{Fig:SSPWUU}
\end{figure}

\section{Summary}

In this section we summarize the main results obtained in the previous sections. Throughout this section we fix a probability space $(\Omega,\mathcal F,\mathbb P)$, and suppose $W:\Omega\times\mathbb R\times\mathbb R\to\mathbb R$ is a standard 2-dimensional Brownian sheet. In this paper we studied 3 types of stochastic equations.

\subsection{A Stochastic Stefan Problem with Spatially Colored Noise}

The solution $u(t,x)$ to the stochastic Stefan equation
\begin{equation}
\begin{split}
 \frac{\partial u}{\partial t}&=\frac{\partial^2 u}{\partial t^2}+ u(t,x)d\xi_t(x),\forall x>\beta(t),\\
u(t,x)&=0,\forall x\le \beta(t),\\
u(0,x)&=u_0(x), \\
\rho \dot \beta(t) &=\lim_{x\searrow \beta(t)} \frac{u(t,x)}x, \mathbb P\textrm{-a.s.}\\
\end{split}
\end{equation}
 exists and is unique for $0\le t <\tau:=\lim_{L\to\infty}\tau^L$ where $\tau^L:=\inf\{t\in\mathbb R_+\cup\{0\}: |\dot \beta(t)|\ge L\}$.

\subsection{Boundary Regularity of the Stochastic Heat Equation}

The solution $u(t,x)$ to the stochastic heat equation
\begin{equation}
\begin{split}
\frac{\partial u}{\partial x}(t,x)&=\frac{\partial^2 u}{\partial x^2}(t,x)+u(t,x)\frac{\partial^2 W}{\partial t\partial x},\forall x>0,t\in[0,T],\\
u(0,x)&=u_0(x),\\
u(t,0)&=0,\forall x\le 0,t\in[0,T]
\end{split}\end{equation}
exists and is unique; Moreover, define $v(t,x):=u(t,x)/x$, then $\mathbb P$-a.s.,
\[ \lim_{x\searrow 0}v(t,x)=\frac{\partial u}{\partial x}(t,0+)=\int_0^\infty \frac{\partial p}{\partial x}(t,0,y)u_0(y)dy + \int_0^t\int_0^\infty \frac{\partial p}{\partial x}(t,0,y) u(s,y)W(dyds);\]
If we further define $v(t,0):=\lim_{x\searrow0}v(t,x)$, then $\mathbb P$-a.s.,
$v(t,x)$ is $\left(\frac14-\epsilon,\frac16-\epsilon\right)$-H\"{o}lder continuous on $[0,T]\times[0,1]$ for $\epsilon>0$.

\subsection{A Stochastic Stefan Problem with Space-Time Brownian Noise}

Let $\sigma:[0,\infty)\to\mathbb R$ be a regular scaling function (see the definition given in Section~\ref{Ch:sspw}). Then the solution $u(t,x)$ to the stochastic Stefan equation
\begin{equation}
\begin{split}
 \frac{\partial u}{\partial t}&=\frac{\partial^2 u}{\partial t^2}+ \sigma(x-\beta(t))\frac{\partial^2 W}{\partial t\partial x},\forall x>\beta(t),\\
u(t,x)&=0,\forall x\le \beta(t),\\
u(0,x)&=u_0(x), \\
\rho \dot \beta(t) &=\lim_{x\searrow \beta(t)} \frac{u(t,x)}x,\mathbb P\textrm{-a.s.}\\
 \end{split}\end{equation}
exists and is unique for $0\le t<\tau$ for $0\le t <\tau:=\lim_{L\to\infty}\tau^L$ where $\tau^L:=\inf\{t\in\mathbb R_+\cup\{0\}: |\dot \beta(t)|\ge L\}$; Moreover, $\mathbb P$-a.s., $\beta\in C^1([0,\tau))$ and $\dot\beta(t)$ is $\left(\frac14-\epsilon\right)$-H\"{o}lder continuous for $\epsilon>0$.

\bibliographystyle{plain}
\bibliography{thesisbib}

\end{document}